\documentclass[twoside,11pt]{article}

\usepackage{blindtext}

%

%
%
%

\usepackage{jmlr2e}

\usepackage{amsfonts}
\usepackage{amsmath}
\usepackage{amssymb}
\usepackage{bm}
\usepackage{caption}
\usepackage{subfigure}
\usepackage{graphicx}
\usepackage{algorithm}
\usepackage{algpseudocode}
\usepackage{multirow}
\usepackage{bbm}
\usepackage{xcolor}
\usepackage{booktabs}
\usepackage{yfonts}
\usepackage{enumitem}
\usepackage{adjustbox}
\usepackage{mathtools}
\usepackage[left,mathlines]{lineno}

\newcommand{\Bx}{\bm{x}}
\newcommand{\Be}{\bm{e}}
\newcommand{\Bhe}{\bm{\hat{e}}}

\newcommand{\Bp}{\bm{p}}

\newcommand{\R}{\ifmmode\mathbb{R}\else$\mathbb{R}$\fi}
\newcommand{\C}{\ifmmode\mathbb{C}\else$\mathbb{C}$\fi}
\newcommand{\N}{\ifmmode\mathbb{N}\else$\mathbb{N}$\fi}
\newcommand{\Q}{\ifmmode\mathbb{Q}\else$\mathbb{Q}$\fi}
\newcommand{\Z}{\ifmmode\mathbb{Z}\else$\mathbb{Z}$\fi}

\DeclareMathOperator*{\argmin}{arg\,min}

\usepackage{hyperref}



\usepackage{lastpage}

\firstpageno{1}

\begin{document}

\title{Finite Expression Method for Solving High-Dimensional Partial Differential Equations}

\author{\name Senwei Liang \email senwei.liang@ttu.edu \\
       \addr Department of Mathematics and Statistics\\
       Texas Tech University\\
       Lubbock, TX 79409, USA
       \AND
       \name Haizhao Yang \email hzyang@umd.edu \\
       \addr Department of Mathematics and Department of Computer Science\\
       University of Maryland, College Park\\
       College Park, MD 20742, USA}

\editor{}

\maketitle

\begin{abstract}
Designing efficient and accurate numerical solvers for high-dimensional partial differential equations (PDEs) remains a challenging and important topic in computational science and engineering, mainly due to the ``curse of dimensionality" in designing numerical schemes that scale in dimension. This paper introduces a new methodology that seeks an approximate PDE solution in the space of functions with finitely many analytic expressions and, hence, this methodology is named the finite expression method (FEX). It is proved in approximation theory that FEX can avoid the curse of dimensionality. As a proof of concept, a deep reinforcement learning method is proposed to implement FEX for various high-dimensional PDEs in different dimensions, achieving high and even machine accuracy with a memory complexity polynomial in dimension and an amenable time complexity. An approximate solution with finite analytic expressions also provides interpretable insights into the ground truth PDE solution, which can further help to advance the understanding of physical systems and design postprocessing techniques for a refined solution.
\end{abstract}

\begin{keywords}
  high-dimensional PDEs, deep neural networks, mathematical expressions, finite expression method, reinforcement learning
\end{keywords}

\section{Introduction}

Partial differential equations (PDEs) play a fundamental role in scientific fields for modeling diverse physical phenomena, including diffusion~\citep{philibert2005one,kirkwood1960flow}, fluid dynamics~\citep{acheson1991elementary,shinbrot2012lectures} and quantum mechanics~\citep{feynman1965feynman,landau2013quantum}. Developing efficient and accurate solvers for numerical solutions to high-dimensional PDEs remains an important and challenging topic~\citep{weinan2021algorithms}. Many traditional solvers, such as finite element method (FEM)~\citep{reddy2004introduction} and finite difference~\citep{grossmann2007numerical}, are usually limited to low-dimensional domains since the computational cost increases exponentially in the dimension~\citep{weinan2021algorithms,fuli2022}. Recently, neural networks (NNs) as mesh-free parameterization are widely employed in solving high-dimensional PDEs~\citep{Weinan2017,Han2018,Khoo2017SolvingPP,RAISSI2019686,sirignano2018dgm,yu2018deep} and control problems \citep{Han2016DeepLA}. In theory, NNs have the capability of approximating various functions well and lessening the curse of dimensionality \citep{Shen4,Shen5,Yarotsky2021ElementarySA,Shen2021DeepNA,Jiao2021DeepNN}. Yet the highly non-convex objective function and the spectral bias toward fitting a smooth function in NN optimization make it difficult to achieve high accuracy~\citep{FP,cao2019towards,ronen2019convergence}. In practice, NN-based solvers can hardly achieve a highly accurate solution even when the true solution is a simple function, especially for high-dimensional problems~\citep{yu2018deep,Liu2020Jul}. 
In addition, NN parameterization may still require large memory and high computational cost for high-dimensional problems~\citep{bianco2018benchmark}. Finally, numerical solutions from both traditional solvers and NN-based solvers lack interpretability, e.g., the dependence of the solution on variables is not readily apparent from the numerical solutions.

\begin{figure}
		\centering
		\includegraphics[width=0.99\linewidth]{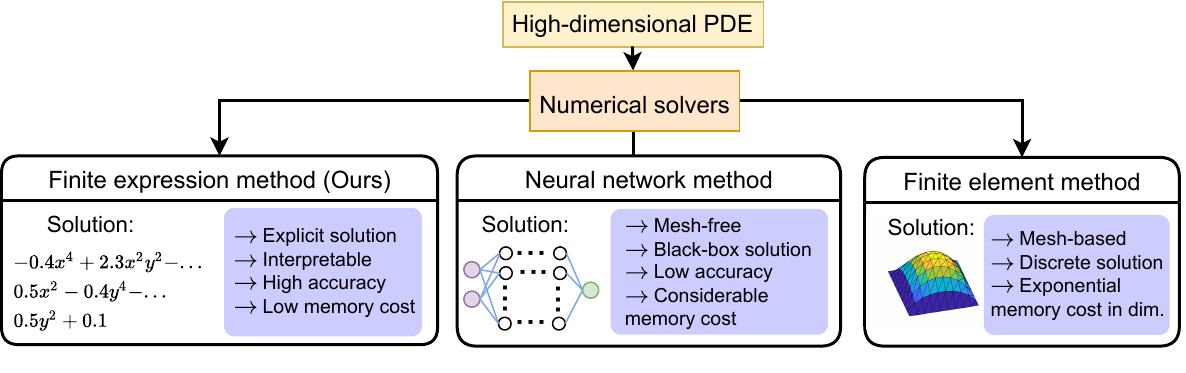}
		\caption{Overview of various numerical solvers for PDEs. In our finite expression method, we aim to find a PDE solution as a mathematical expression with finitely many operators. The resulting solution can reproduce the true solution and achieve high, even machine-level, accuracy. Furthermore, the mathematical expression has  memory complexity polynomial in dimension and provides interpretable insights to advance understanding of physical systems and aid in designing postprocessing techniques for solution refinement. }
		\label{fig:concept}
\end{figure}
	
In this paper, we propose the finite expression method (FEX), a methodology that aims to find a solution in the function space of mathematical expressions with finitely many operators. Compared with the NN and FEM methods, our FEX enjoys the following advantages (summarized in Figure \ref{fig:concept}): (1) The expression can reproduce the true solution and achieve high, even machine-level, accuracy. (2) The expression requires low memory (a line of string) for solution storage and low computational cost for evaluation on new points. (3) The expression has good interpretability in an explicit and readable form. Moreover, from an approximation theory perspective detailed in Section \ref{sec:theory}, the expression in FEX is capable of avoiding the curse of dimensionality in theory. 

In FEX, we formulate the search for mathematical expressions as a combinatorial optimization (CO) involving both discrete and continuous variables. While many techniques can be used to solve the CO in FEX, we provide a numerically effective implementation based on reinforcement learning (RL). Traditional algorithms (e.g., genetic programming and simulated annealing \citep{murray2012modelling}) address CO problems by employing hand-crafted heuristics that are highly dependent on specific problem formulations and domain knowledge \citep{bello2016neural,cheung2019thompson,mazyavkina2021reinforcement}. However, RL has emerged as a popular and versatile tool for learning how to construct CO solutions based on reward feedback, without the need for extensive heuristic design. The success of RL applications in CO, such as automatic algorithm design \citep{ramachandran2018searching,bello2017neural,co-reyes2021evolving} and symbolic regression \citep{petersen2021deep,pmlr-v139-landajuela21a}, has inspired us to seek mathematical expression solutions with RL.  Specifically, in our implementation, the mathematical expression is represented by a binary tree, where each node is an operator along with parameters (scaling and bias) as shown in Figure \ref{fig:trees} and further explained in Section \ref{sec:elementary}. The objective function we aim to minimize is a functional, and its minimization leads to the solution of the PDE, as described in Section \ref{sec:error}. Consequently, our problem involves the minimization of both discrete operators and continuous parameters embedded within the tree structure.  Optimizing both discrete and continuous variables simultaneously is inherently difficult. We propose a search loop for this CO as depicted in Figure \ref{fig:workflow}. Our idea is to first identify good operator selections that have a high possibility of recovering the true solution structure, and then optimize the parameters. The proposals of operator selections are drawn from a controller which will be updated via the policy gradient \citep{petersen2021deep} iteratively. In Section \ref{sec:numericalexp}, we numerically demonstrate the ability of this RL-based implementation to find mathematical expressions that solve high-dimensional PDEs with high, even machine-level accuracy. Furthermore, FEX provides interpretable insights into the ground-truth PDE solutions, which can further advance understanding of physical systems and aid in designing postprocessing techniques to refine solutions.

\begin{figure}[ht]
		\centering
		\includegraphics[width=0.80\linewidth]{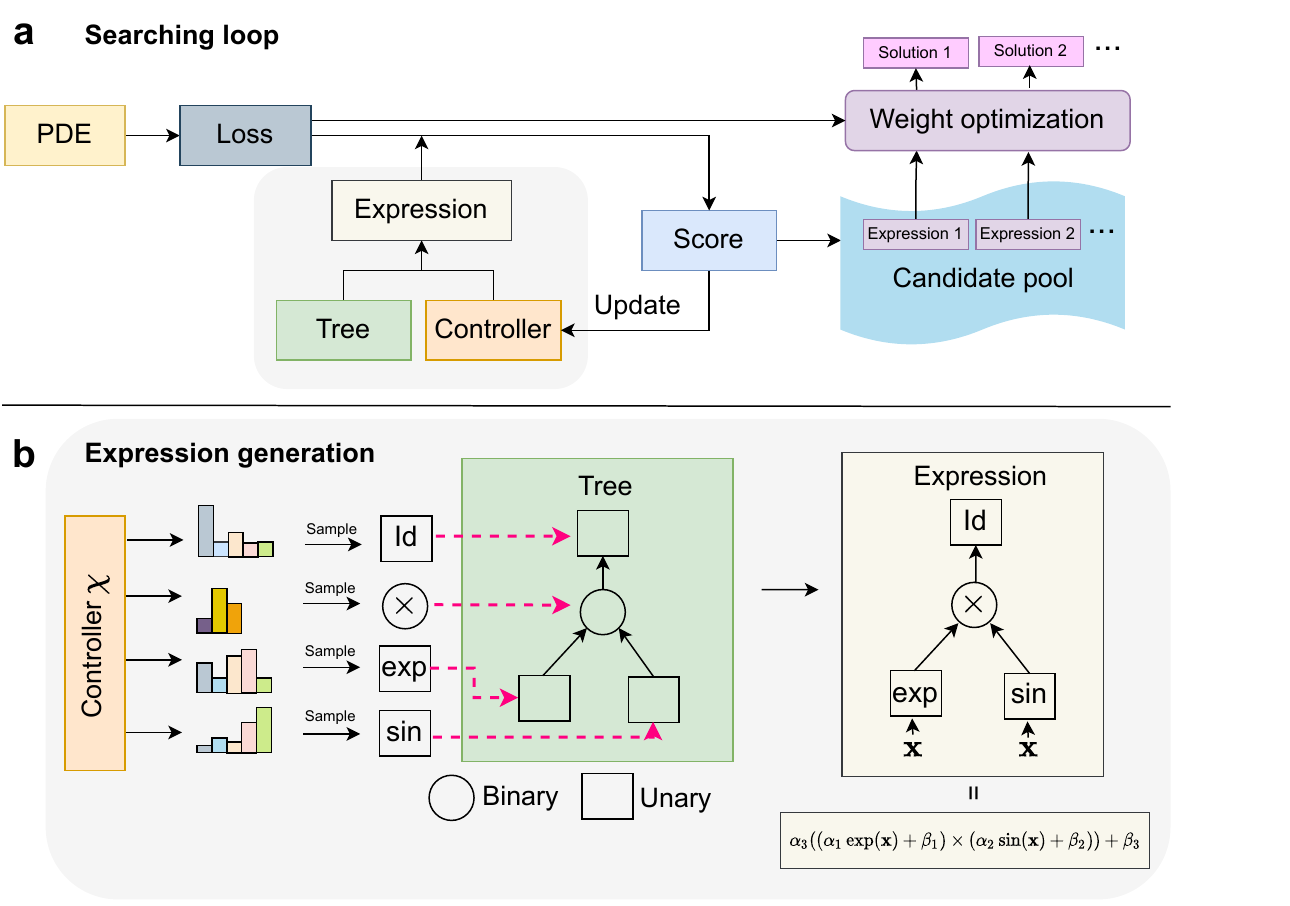}
		\caption{Representation of the components of our FEX implementation. (a) The searching loop for the finite expression solution consists of expression generation, score computation, controller update, and candidate optimization. (b) Depiction of the expression generation with a binary tree and a controller $\bm{\chi}$. The controller outputs probability mass functions for each node in the tree, and these functions are used to sample node values. The final expression, which incorporates learnable scaling and bias parameters, is constructed based on the predefined tree structure and the sampled node values. }
		\label{fig:workflow}
\end{figure}

\section{An Abstract Methodology of Finite Expression Method}
\label{sec:abstractframework}
The goal of FEX is to find a mathematical expression to solve a PDE. This section will formally define the function space of mathematical expressions and formulate the problem of FEX as a CO. 

\subsection{Function Space with Finite Expressions in FEX}
\label{sec:elementaryfun}
Mathematical expressions will be introduced to form the function space in FEX. 

\begin{definition}[Mathematical expression] A mathematical expression is a combination of symbols, which is well-formed by syntax and rules and forms a valid function. The symbols include operands (variables and numbers), operators (e.g.,``$+$'', ``$\sin$'', integral, derivative), brackets, punctuation.
\end{definition}

In the definition of mathematical expressions, we only consider the expression that forms a valid function. In our context, ``$\sin(x \times y)+1$'' is a mathematical expression, but, for example, ``$5>x$'' and ``$\sin(x \times y)+$'' are not a mathematical expression as they do not form a valid function. The operands and operators comprise the structure of an expression. The parentheses play a role in clarifying the operation order. 
\begin{definition}[$k$-finite expression] A mathematical expression is called a $k$-finite expression if the number of operators in this expression is $k$.
\end{definition}

``$\sin(x \times y)+1$'' is a $3$-finite expression since there are three operators in it (i.e., ``$\times$'', ``$\sin$'', and ``+''). The series, such as ``$1+x^1+\frac{x^2}{2}+\frac{x^3}{6}+\cdots$'', belongs to a mathematical expression, but it is not a finite expression since the amount of the operators is infinite. Formally, with the concept of finite expression, we can define FEX as follows,
\begin{definition}[Finite expression method] The finite expression method is a methodology to solve a PDE numerically by seeking a finite expression such that the resulting function solves the PDE approximately. 
\end{definition}

We denote $\mathbb{S}_k$ as a set of functions that are formed by finite expressions with the number of operators not larger than $k$. This $\mathbb{S}_k$ forms the function space in FEX. Clearly, $\mathbb{S}_1\subset \mathbb{S}_2\subset \mathbb{S}_3\cdots $. 

\subsection{Identifying PDE Solutions in FEX}
\label{sec:error} We denote a functional $\mathcal{L}: \mathbb{S} \to \mathbb{R}$ associated with a given PDE, where $\mathbb{S}$ is a function space and the minimizer of $\mathcal{L}$ is the best solution to solve the PDE in $\mathbb{S}$. In FEX, given the number of operators $k$, the problem of seeking a finite expression solution is formulated as a CO over $\mathbb{S}_k$ via 
\begin{align}\min \{\mathcal{L}(u)|u \in \mathbb{S}_k\}.\label{eqn:co}\end{align}

The choice of the functional $\mathcal{L}$ is problem dependent and one may conceive a better functional for a specific PDE with a specific constraint or domain. Some popular choices include least-square methods~\citep{lagaris1998artificial,sirignano2018dgm,nn1994}, variation formulations~\citep{yu2018deep,CiCP-29-1365,chen2020friedrichs,zang2020weak} and so on.

\subsubsection{Least Square Method} Suppose that the PDE is given by
\begin{equation}\label{eqn:BVP}
\begin{split}
 \mathcal{D}u(\bm{x})=f(u(\bm{x}),\bm{x}),\text{~}\bm{x} \in \Omega, \qquad
 \mathcal{B}u(\bm{x})=g(\bm{x}), \text{~}\bm{x} \in \partial\Omega,
\end{split}
\end{equation}
where $\mathcal{D}$ is a differential operator, $f(u(\bm{x}),\bm{x})$ can be a nonlinear function in $u$, $\Omega$ is a bounded domain in $\mathbb{R}^d$, and $\mathcal{B}u=g$ characterizes the boundary condition (e.g., Dirichlet, Neumann and Robin~\citep{evans2010partial}). The least square method~\citep{lagaris1998artificial,sirignano2018dgm,nn1994} defines a straightforward functional to characterize the error of the estimated solution by 
\begin{equation}\label{eqn:loss}
\mathcal{L}(u):=\|\mathcal{D}u(\bm{x})-f(u,\bm{x})\|_{L^2(\Omega)}^2+\lambda\|\mathcal{B}u(\bm{x})-g(\bm{x})\|_{L^2(\partial\Omega)}^2,
\end{equation}
 where $\lambda$ is a positive coefficient to enforce the boundary constraint. 

\subsubsection{Variation Formulation} We next introduce the variation formulation, which is commonly used to identify numerical PDE solutions~\citep{yu2018deep,CiCP-29-1365}. As an example, consider an elliptic PDE with homogeneous Dirichlet boundary conditions. This PDE is:
\begin{align}
     -\Delta u(\bm{x}) + c(\bm{x})u(\bm{x})=f(\bm{x}),\text{~}\bm{x} \in \Omega, \qquad u(\bm{x})=0, \text{~}\bm{x} \in \partial\Omega,
     \label{eqn:elliptic}
\end{align}
where $c$ is a bounded function and $f\in L^2$. The solution $u$ to PDE \eqref{eqn:elliptic} minimizes the variation formulation $\frac{1}{2}\int_\Omega\|\nabla u\|^2+cu^2\text{d}\bm{x}-\int_\Omega fu\text{d}\bm{x}$. By incorporating the boundary condition penalty into this variation, we obtain the functional:
\begin{equation}\label{eqn:vari}
\mathcal{L}(u):=\frac{1}{2}\int_\Omega\|\nabla u\|^2+cu^2\text{d}\bm{x}-\int_\Omega fu\text{d}\bm{x}+\lambda\int_{\partial\Omega} u^2\text{d}\bm{x}.
\end{equation}

 An alternative variation formulation can be defined using test functions. Let $v \in H_0^1(\Omega)$ be a test function, where $H_0^1(\Omega)$ denotes the Sobolev space whose weak derivative is $L^2$ integrable with zero boundary values. The weak solution $u$ of Eqn.~\eqref{eqn:elliptic} is defined as the function that satisfies the bilinear equations: 
\begin{align}
\begin{split}
    a(u,v):= \int_\Omega \nabla u \nabla v+cuv - fv \text{d}\bm{x} = 0, \quad \forall v \in H_0^1(\Omega), \quad u(\bm{x})=0, \text{~}\bm{x} \in \partial\Omega,
\end{split}
\end{align}
where $a(u,v)$ is constructed by multiplying \eqref{eqn:elliptic} and $v$, and integration by parts. All derivatives of the solution function can be transferred to the test function through repeated integration by parts, yielding another bilinear forms \citep{chen2020friedrichs}. The weak solution can be reformulated as the solution to a saddle-point problem~\citep{zang2020weak}:
\begin{align}
    \min_{u \in H_0^1(\Omega)} \max_{v \in H_0^1(\Omega)} |a(u,v)|^2/\|v\|_{L^2(\Omega)}^2.
\end{align}
Then, the functional $\mathcal{L}$ identifying the PDE solution is:
\begin{equation}\label{eqn:weak}
\mathcal{L}(u):=\max_{v \in H_0^1(\Omega)} |a(u,v)|^2/\|v\|_{L^2(\Omega)}^2+\lambda\int_{\partial\Omega} u^2\text{d}\bm{x}.
\end{equation}

\subsection{Approximation Theory of Elementary Expressions in FEX}
\label{sec:theory}

The most important theoretical question in solving high-dimensional problems is whether or not a solver suffers from the curse of dimensionality. It will be shown that the function space of $k$-finite expressions, i.e., $\mathbb{S}_k$ in \eqref{eqn:co}, is a powerful function space that avoids the curse of dimensionality in approximating high-dimensional continuous functions, leveraging the recent development of advanced approximation theory of deep neural networks \citep{Shen2021DeepNA,Jiao2021DeepNN}. First of all, it can be proved that $\mathbb{S}_k$ is dense in $C([0,1]^d)$ for an arbitrary $d\in\mathbb{N}$ in the following theorem.

\begin{theorem}
\label{thm:main}
The function space $\mathbb{S}_k$, generated with operators including  ``$+$'', ``$-$'', ``$\times$'', ``$/$'',  ``$|\cdot|$'', ``sign$(\cdot)$'', and ``$\lfloor \cdot \rfloor$'', is dense in $C([a,b]^d)$ for arbitrary $a$, $b\in \mathbb{R}$ and $d\in \mathbb{N}$ if $k\geq \mathcal{O}(d^4)$.
\end{theorem}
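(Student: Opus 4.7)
The plan is to reduce the density statement to recent super-approximation results for deep neural networks using floor, sign, or ReLU activations, and then count how many of the allowed operators are needed to implement such a network as a mathematical expression.

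First, I would verify that the allowed operator toolkit $\{+,-,\times,/,|\cdot|,\mathrm{sign}(\cdot),\lfloor\cdot\rfloor\}$ is expressive enough to realize the basic atoms of a neural network. Since $|\cdot|$ is permitted, one obtains $\mathrm{ReLU}(x)=(x+|x|)/2$ using only a constant number of operators, and any affine form $\sum_{i=1}^m w_i x_i + b$ costs at most $2m$ operators. Thus a single neuron $\sigma\!\bigl(\sum_i w_i x_i+b\bigr)$ with $\sigma\in\{\mathrm{ReLU},\lfloor\cdot\rfloor,\mathrm{sign}\}$ lies in $\mathbb{S}_{\mathcal{O}(m)}$. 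Consequently, any feed-forward network with depth $L$, width $N$, and activations from this list can be written as a finite expression built from the allowed operators, using at most $\mathcal{O}(L\cdot N\cdot(N+d))$ operators in total (the $d$ accounting for the first-layer fan-in, and $N$ for subsequent layers). A constant-cost input rescaling maps $[a,b]^d$ into $[0,1]^d$, so it suffices to work on the unit cube.

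Next, I would invoke a quantitative super-approximation result for Floor-ReLU or Floor-Sign-ReLU networks, such as the ones cited in the excerpt as \cite{Shen2021DeepNA,Jiao2021DeepNN}: for every $f\in C([0,1]^d)$ and every $\varepsilon>0$, there exists a network of one of these types, with width and depth both polynomial in $d$, approximating $f$ uniformly to within $\varepsilon$. Choosing a variant in which width $N$ and depth $L$ are each $\mathcal{O}(d)$ (or $\mathcal{O}(d^{3/2})$, depending on the form used) and plugging into the operator count gives $k=\mathcal{O}(d^4)$, establishing $\inf_{u\in\mathbb{S}_k}\|f-u\|_{\infty}\le \varepsilon$ and therefore density.

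The principal obstacle I anticipate is the bookkeeping needed to match the neural-network operator count to the stated $\mathcal{O}(d^4)$ bound. Different versions of the Shen--Jiao type theorems trade width against depth in different ways, and some package several floor/sign operations inside what they call a single activation. To get $\mathcal{O}(d^4)$ cleanly, I would need to (i) pick a specific theorem whose network size is dominated by $d^4$ after multiplying width, depth, and per-neuron operator cost, and (ii) confirm that every activation gadget used in that theorem can be synthesized inside $\{+,-,\times,/,|\cdot|,\mathrm{sign},\lfloor\cdot\rfloor\}$ at $\mathcal{O}(1)$ cost. Since floor, sign, and ReLU-via-$|\cdot|$ are all directly available, no activation outside the toolkit is ever required, and the remaining work is purely combinatorial accounting rather than new analysis.
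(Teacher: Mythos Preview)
Your proposal is correct and follows essentially the same route as the paper: invoke a super-approximation theorem for neural networks (specifically Thm.~1.1 of \cite{Shen2021DeepNA}, which gives width $N=36d(2d+1)=\mathcal{O}(d^2)$ and \emph{constant} depth $L=11$), verify that the activation used there can be written with $\mathcal{O}(1)$ of the allowed operators, and then count. The paper's activation is $\sigma(x)=\tfrac{\mathrm{sign}(x)+1}{2}\bigl|x-2\lfloor\tfrac{x+1}{2}\rfloor\bigr|-\tfrac{\mathrm{sign}(x)-1}{2}\cdot\tfrac{x}{|x|+1}$, costing at most $18$ operators, and the final tally $\mathcal{O}(LN^2)=\mathcal{O}(d^4)$ resolves exactly the bookkeeping you flagged.
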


Here ``$|\cdot|$'', ``sign$(\cdot)$'', and ``$\lfloor \cdot \rfloor$'' denote the absolute, sign and floor functions~\citep{Shen2021DeepNA}, respectively. The proof of Theorem \ref{thm:main} can be found in Appendix \ref{sec:proof}. The denseness of $\mathbb{S}_k$ means that the function space of $k$-finite expressions can approximate any $d$-dimensional continuous functions to any accuracy, while $k$ is only required to be $\mathcal{O}(d^4)$ independent of the approximation accuracy. The proof of Theorem \ref{thm:main} takes the advantage of operators ``sign$(\cdot)$'' and ``$\lfloor \cdot \rfloor$'', which might not be frequently used in mathematical expressions. If it is more practical to restrict the operator list to regular operators like ``$+$'', ``$\times$'', ``$\sin(\cdot)$'', exponential functions, and the rectified linear unit (ReLU), then it can be proved that $\mathbb{S}_k$ can approximate H{\"o}lder functions without the curse of dimensionality in the following theorem.

\begin{theorem}
\label{thm:main2}
Suppose the function space $\mathbb{S}_k$ is generated with operators including  ``$+$'', ``$-$'', ``$\times$'', ``$\div$'',  ``$\max\{0,x\}$'', ``$\sin(x)$'', and ``$2^x$''. Let $p\in[1,+\infty)$. For any $f$ in the H{\"o}lder function class $\mathcal{H}^\alpha_\mu([0,1]^d)$ and $\varepsilon>0$, there exists a $k$-finite expression $\phi$ in $\mathbb{S}_k$ such that $\|f-\phi\|_{L^p}\leq \varepsilon$, if $k\geq \mathcal{O}(d^2(\log d+\log\frac{1}{\varepsilon})^2)$.
\end{theorem}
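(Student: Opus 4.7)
The plan is to reduce Theorem~\ref{thm:main2} to a neural network super-approximation result and then compile that network into a finite expression, mirroring the strategy for Theorem~\ref{thm:main} but with a different activation list. The engine is the advanced approximation theory developed in \cite{Shen2021DeepNA,Jiao2021DeepNN}, which shows that feed-forward networks using activations such as $\sin(x)$ and $2^x$ (together with ReLU $\max\{0,x\}$) can approximate H\"older classes with rates that depend only polynomially on the dimension $d$ and poly-logarithmically on $1/\varepsilon$, circumventing the curse of dimensionality.

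First I would invoke the appropriate super-approximation theorem from those references: for any $f\in\mathcal{H}^\alpha_\mu([0,1]^d)$ and $\varepsilon>0$, there is a feed-forward network $\mathcal{N}$ whose nodes apply activations drawn from $\{\max\{0,x\},\sin(x),2^x\}$, with width $W$ and depth $L$ chosen so that $W^{2}L=\mathcal{O}\!\left(d^{2}(\log d+\log\tfrac{1}{\varepsilon})^{2}\right)$, satisfying $\|f-\mathcal{N}\|_{L^p([0,1]^d)}\leq\varepsilon$. The specific allocation of $W$ and $L$ (e.g.\ $W=\mathcal{O}(d(\log d+\log\tfrac{1}{\varepsilon}))$ with $L=\mathcal{O}(1)$, or a more layered split) will follow directly from the cited constructions. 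The weights and biases produced by the theorem become scalar constants inside the finite expression.

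Second I would translate $\mathcal{N}$ into a mathematical expression. Every neuron computes a term of the form $\sigma\!\left(\sum_{j} w_j z_j + b\right)$ with $\sigma$ in the allowed activation list, which expands into $\mathcal{O}(W)$ elementary operators drawn from ``$+$'', ``$\times$'', ``$\max\{0,x\}$'', ``$\sin(x)$'', and ``$2^x$''. Summing over all $\mathcal{O}(WL)$ neurons gives the claimed operator count, so the resulting expression lies in $\mathbb{S}_k$ for $k=\mathcal{O}\!\left(d^{2}(\log d+\log\tfrac{1}{\varepsilon})^{2}\right)$; since the function space $\mathbb{S}_k$ is monotone in $k$, any larger $k$ suffices as well. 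The operators ``$-$'' and ``$/$'' named in the theorem are simply unused and harmlessly enlarge $\mathbb{S}_k$.

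The main obstacle I anticipate is the DAG-versus-tree mismatch. In a fully connected network each hidden neuron feeds into $W$ successors, so a naive unrolling of the computation into a tree of operators can inflate the count by a factor as large as $W^{L}$. Circumventing this requires either (i) reading the definition of a mathematical expression broadly enough to permit named shared sub-expressions, so that the operator tally agrees with the straight-line program length of $\mathcal{N}$, or (ii) using a specific sparse sub-network architecture from \cite{Shen2021DeepNA,Jiao2021DeepNN} whose topology guarantees bounded fan-out at every intermediate node. Verifying this bookkeeping, and confirming that the $L^p$ approximation error of $\mathcal{N}$ is preserved verbatim under the finite-expression reading, is the delicate accounting step of the proof.
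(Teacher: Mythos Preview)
Your proposal is correct and matches the paper's own proof: the paper invokes Cor.~3.8 of \cite{Jiao2021DeepNN} to obtain a depth-$6$ FNN with width $N=\mathcal{O}(d(\log d+\log\tfrac{1}{\varepsilon}))$ using activations from $\{\sin(x),\max\{0,x\},2^x\}$, then counts $N(d+1)+(L-1)N^2$ multiplications, $Nd-1+(L-1)N(N-1)$ additions, and $NL$ activation evaluations to get $k=\mathcal{O}(d^2(\log d+\log\tfrac{1}{\varepsilon})^2)$. Regarding your DAG-versus-tree worry, the paper simply takes option~(i): its definition of a $k$-finite expression counts operator occurrences in the written formula (effectively the straight-line program length), so shared sub-expressions are tacitly permitted and no further bookkeeping is done.
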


The proof of Theorem \ref{thm:main2} can be found in Appendix \ref{sec:proof}. Although finite expressions have a powerful approximation capacity for high-dimensional functions, it is challenging to theoretically prove that our FEX solver to be introduced in Section \ref{sec:alg} can identify the desired finite expressions with this power. Furthermore, the information-theoretic limitations discussed in~\citep{yarotsky2020phase} highlight that deep models with fewer weights would require significantly higher weight precision to achieve their theoretical approximation power. Similarly, for FEX, while the method may require only a few operators to form an explicit expression to approximate a high-dimensional function, it would require high precision to learn the parameters of these expressions. Given the finite precision in practical implementations, achieving the full theoretical approximation power of FEX for general high-dimensional functions can be challenging. Nevertheless, the numerical experiments in Section \ref{sec:numericalexp} demonstrate that our FEX solver can identify the desired finite expressions to machine precision for several classes of high-dimensional PDEs. Therefore, FEX would be an appealling alternative of existing tools for high-dimensional problems. More importantly, it is intuitive to understand when FEX can find solutions of high accuracy, solutions with explicit expressions, which is a class of functions widely adopted in benchmark comparisons in the literature, while deep models struggle to achieve high accuracy.

\section{An Implementation of FEX} 
\label{sec:alg}

Following the introduction of the abstract FEX methodology in Section \ref{sec:abstractframework}, this section proposes a numerical implementation. First, binary trees are applied to construct finite expressions in Section \ref{sec:elementary}. Next, our CO problem \eqref{eqn:co} is formulated in terms of parameter and operator selection to find expressions that approximate PDE solutions in Section \ref{sec:farmework}. To resolve this CO, we propose implementing a search loop to identify effective operators that have the potential to recover the true solution when selected for expression. In Appendix~\ref{appendix:code}, we provide pseudo-code for the FEX algorithm, which employs expanding trees to search for a solution. For the reader's convenience, we have summarized the key notations used in this section in Table~\ref{tab:notation}. 

\begin{table}[htbp]
  \centering
    \begin{adjustbox}{width=0.6\textwidth,center}
    \begin{tabular}{ll}
    \toprule
    \textbf{Notation} & \textbf{Explanation} \\
    \midrule
    $\mathcal{T}$     & A binary tree \\
    $\Be$     & Operator sequence  \\
    $\bm{\theta}$     & Trainable scaling and bias parameters\\
    $\mathcal{L}$     & The functional associated with the PDE solution \\
    $S$    & The scoring function that maps an operator sequence to $[0,1]$ \\
    $\bm{\chi}_\Phi$ & The controller parameterized by $\Phi$\\
    $\mathcal{J}$ & The objective function for the policy-gradient approach\\
    \bottomrule
    \end{tabular}%
  \end{adjustbox}
  \caption{A summary of notations in the FEX implementation.}
  \label{tab:notation}%
\end{table}%


\begin{figure}[ht]
		\centering
		\includegraphics[width=0.99\linewidth]{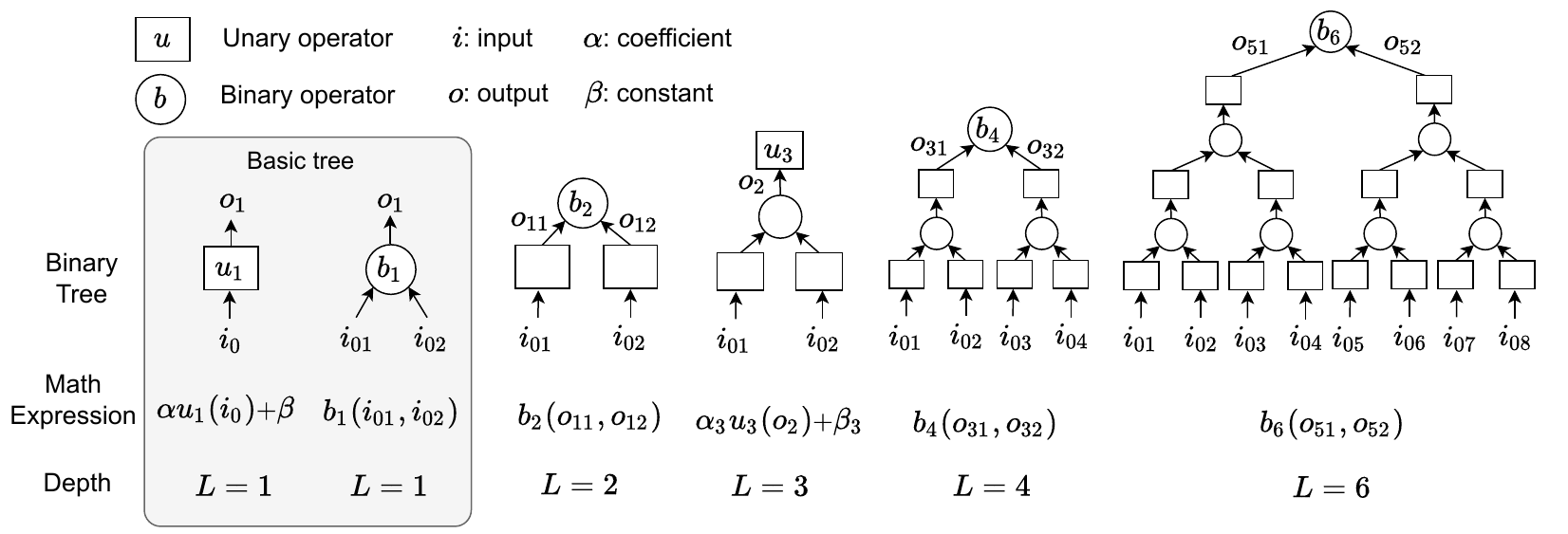}
		\caption{Computational rule of a binary tree. In a binary tree, each node holds either a unary or a binary operator. We begin by defining the computational process for a depth-1 tree (i.e., $L=1$), which contains only a single operator. For binary trees with more than depth one (i.e., $L>1$), the computation is performed recursively.}
		\label{fig:trees}
	\end{figure}

\subsection{Finite Expressions with Binary Trees}
\label{sec:elementary}
A finite expression can be represented as a binary tree $\mathcal{T}$, as depicted in Figure~\ref{fig:trees}. Each node in the tree is assigned a value from a set of operators, and all these node values collectively form an operator sequence $\Be$, following a predefined order to traverse the tree (e.g., inorder traversal). 

 Within each node featuring a unary operator, we incorporate two additional parameters, a scaling parameter $\alpha$ and a bias parameter $\beta$, to enhance expressiveness. All these parameters are denoted by $\bm{\theta}$. Therefore, a finite expression can be denoted as $u(\bm{x}; \mathcal{T}, \mathbf{e}, \bm{\theta})$, a function of $\bm{x}$. For a fixed $\mathcal{T}$, the maximal number of operators is upper bounded by a constant denoted $k_\mathcal{T}$. In this implementation, $\{u(\bm{x}; \mathcal{T}, \mathbf{e}, \bm{\theta}) |\mathbf{e}, \bm{\theta}\}$  forms the function space in the CO to solve a PDE. This is a subset of functions expressible with at most $k_\mathcal{T}$ finite expressions.

The configuration of the tree of various depths can be designed as in  Figure~\ref{fig:trees}. Each tree node is either a binary operator or a unary operator that takes value from the corresponding binary or unary set. The binary set can be $\mathbb{B}:=\{+,-,\times,\div,\cdots\}$. The unary set can be $\mathbb{U}:=\{\sin,\exp, \log, \text{Id}, (\cdot)^2, \int\cdot\text{d} x_i, \frac{\partial\cdot}{\partial x_i}, \cdots\}$, which contains elementary functions (e.g., polynomial and trigonometric function), antiderivative and differentiation operators. Here ``Id'' denotes the identity map. Note that if an integration or a derivative is used in the expression, the operator can be applied numerically. Each entry in the operator sequence $\Be$ has a one-to-one correspondence with the traversal of the nodes of the tree. The length of $\Be$ equals the total number of tree nodes. For example, when inorder traversal is used, Figure \ref{fig:workflow}b depicts a tree with 4 nodes and $\Be=(\text{Id}, \times, \exp, \sin).$

The computation flow of the binary tree is conducted recursively. The operator of a node is granted higher precedence than that of its parent node. First, as in Figure~\ref{fig:trees}, we present the computation flow of the basic trees (a tree has a depth of 1 with only 1 operator). For a basic tree with a unary operator $u_1$, when the input is $i_0$, then the output $o_1=\alpha u_1(i_0)+\beta$, where $\alpha$ and $\beta$ are scaling and bias parameters, respectively. For a basic tree with a binary operator $b_1$, when the input is $i_{01}$ and $i_{02}$, the output becomes $o_1=b_1(i_{01}, i_{02})$. With these two basic trees, we are ready to define the computation for arbitrary depth by recursion, as the examples shown in Figure~\ref{fig:trees}. Specifically, the input of a parent node is the output of the child node(s). When the tree input at the bottom layer is a $d$-dimensional variable $\bm{x}$ (Figure~\ref{fig:workflow}b), the unary operator directly linked to $\bm{x}$ is applied element-wisely to each entry of $\bm{x}$ and the scaling $\alpha$ becomes a $d$-dimensional vector to perform a linear transformation from $\mathbb{R}^d$ to $\mathbb{R}^1$. 

\subsection{Solving a CO in FEX}
\label{sec:farmework}
Given a tree $\mathcal{T}$, we aim to seek the PDE solution from the function space $\{u(\bm{x}; \mathcal{T}, \mathbf{e}, \bm{\theta}) |\mathbf{e}, \bm{\theta}\}\subset \mathbb{S}_{k_{\mathcal{T}}}$. The mathematical expression can be identified via the minimization of the functional $\mathcal{L}$ associated with a PDE, i.e., 
\begin{align}
\min \{\mathcal{L}(u(\cdot; \mathcal{T}, \Be, \bm{\theta}))|\Be, \bm{\theta}\}.
\label{eqn:obj}
\end{align}

We introduce the framework for implementing FEX, as displayed in Figure~\ref{fig:workflow}a, to seek a minimizer of (\ref{eqn:obj}). The basic idea is to find a good operator sequence $\Be$ that may uncover the structure of the true solution, and then optimize the parameter $\bm{\theta}$ to minimize the functional~\eqref{eqn:obj}. In our framework, the searching loop consists of four parts: 1) Score computation (i.e., rewards in RL). A mix-order optimization algorithm is proposed to efficiently assess the score of the operator sequence $\Be$ to uncover the true structure. A higher score suggests a higher possibility to help to identify the true solution. 2) Operator sequence generation (i.e., taking actions in RL). A controller is proposed to generate operator sequences with high scores (see Figure~\ref{fig:workflow}b). 3) Controller update (i.e., policy optimization in RL). The controller is updated to increase the probability of producing a good operator sequence via the score feedback of the generated ones. While the controller can be modeled in many ways (e.g., heuristic algorithm), we introduce the policy gradient in RL to optimize the controller. 4) Candidate optimization (i.e., a non-greedy strategy). During searching, we maintain a candidate pool to store the operator sequence with a high score. After searching, the parameters $\bm{\theta}$ of high-score operator sequences are optimized to approximate the PDE solution. 

\subsubsection{Score Computation} \label{sec:score}
The score of an operator sequence $\Be$ is critical to guide the controller toward generating good operator sequences and help to maintain a candidate pool of high scores. Intuitively, the score of $\Be$ is defined in the range $[0,1]$, namely $S(\Be)$, by 
\begin{align}
    S(\Be) := \big(1+L(\Be)\big)^{-1}, 
    \label{eqn:orgscore}
\end{align}
where $L(\Be):=\min \{\mathcal{L}(u(\cdot; \mathcal{T}, \Be, \bm{\theta}))|\bm{\theta}\})$. When $L(\Be)$ tends to $0$, the expression represented by $\Be$ is close to the true solution, and the score $S(\Be)$ goes to 1. Otherwise, $S(\Be)$ goes to 0. The global minimizer of $\mathcal{L}(u(\cdot; \mathcal{T}, \Be, \bm{\theta}))$ over $\bm{\theta}$ is difficult and expensive to obtain. Instead of exhaustively searching for a global minimizer, a first-order optimization algorithm and a second-order one are combined to accelerate the evaluation of $S(\Be)$.

First-order algorithms (e.g., the stochastic gradient descent~\citep{rumelhart1986learning} and Adam~\citep{kingma2014adam}) that utilize gradient to update are popular in machine learning. Typically, they demand a small learning rate and a substantial number of iterations to converge effectively. It can become time-consuming to optimize $L(\Be)$ using a first-order algorithm.  Alternatively, second-order algorithms (e.g., the Newton method~\citep{avriel2003nonlinear} and the Broyden-Fletcher-Goldfarb-Shanno method (BFGS)~\citep{fletcher2013practical}) use the (approximated) Hessian matrix for faster convergence, but obtaining a good minimizer requires a good initial guess. To expedite the optimization process of $L(\Be)$ in our implementation, we employ a two-step approach. Initially, a first-order algorithm is utilized for $T_1$ steps to obtain a well-informed initial guess. Subsequently, a second-order algorithm is applied for an additional $T_2$ steps to further refine the solution. Let $\bm{\theta}_0^{\Be}$ be an initialization and $\bm{\theta}_{T_1+T_2}^{\Be}$ be the parameter set after $T_1+T_2$ steps of this two-step optimization. Then $\bm{\theta}_{T_1+T_2}^{\Be}$ serves as an approximate of $\arg \min_{\bm{\theta}} \mathcal{L}(u(\cdot; \mathcal{T}, \Be, \bm{\theta}))$. Finally, $S(\Be)$ is estimated by 
\begin{align}
    S(\Be) \approx \big(1+\mathcal{L} (u(\cdot; \mathcal{T}, \Be, \bm{\theta}_{T_1+T_2}^{\Be}))\big)^{-1}. 
    \label{eqn:score}
\end{align}
Remark that the approximation may exhibit significant variation due to the randomness associated with the initialization of $\bm{\theta}_0^{\Be}$.

\subsubsection{Operator Sequence Generation} The role of the controller is to generate operator sequences with high scores during the searching loop. Let $\bm{\chi}_\Phi$ be a controller with model parameter $\Phi$, and $\Phi$ is updated to increase the probability for good operator sequences during the searching loop. We use $\Be\sim\bm{\chi}_\Phi$ to denote the process to sample an $\Be$ according to the controller $\bm{\chi}_\Phi$. 

Treating tree node values of $\mathcal{T}$ as random variables, the controller $\bm{\chi}_\Phi$ outputs probability mass functions $\Bp_\Phi^1, \Bp_\Phi^2, \cdots, \Bp_\Phi^s$ to characterize their distributions, where $s$ is the total number of nodes. Each tree node value $e_j$ is sampled from $\Bp_\Phi^j$ to obtain an operator. Then $\Be:=(e_1, e_2, \cdots, e_s)$ is the operator sequence sampled from $\bm{\chi}_\Phi$. See Figure~\ref{fig:workflow}b for an illustration. Besides, we adopt the $\epsilon$-greedy strategy~\citep{sutton2018reinforcement} to enhance exploration of a potentially high-score $\Be$. With probability $\epsilon<1$, $e_i$ is sampled from a uniform distribution of the operator set. With probability $1-\epsilon$, $e_i\sim \Bp_\Phi^i$. A larger $\epsilon$ leads to a higher probability of exploring new sequences. 

\subsubsection{Controller Update} The goal of the controller update is to guide the controller toward generating high-score operator sequences $\Be$. The updating rule of a controller can be designed based on heuristics (e.g., genetic and simulated annealing algorithms) and gradient-based methods (e.g., policy gradient and darts~\citep{liu2018darts}). As proof of concept, we introduce a policy-gradient-based updating rule in RL. The policy gradient method aims to maximize the return by optimizing a parameterized policy, and the controller in our problem plays the role of a policy. 

In this paper, the controller $\bm{\chi}_\Phi$ is modeled as a neural network parameterized by $\Phi$. The training objective of the controller is to maximize the expected score of a sampled $\Be$, i.e., 
\begin{align}
\mathcal{J}(\Phi):=\mathbb{E}_{\Be \sim \bm{\chi}_\Phi} S(\Be).
\label{eqn:expect}
\end{align}
Taking the derivative of \eqref{eqn:expect} with respect to $\Phi$, we have 
\begin{align}
\nabla_\Phi\mathcal{J}(\Phi)=\mathbb{E}_{\Be \sim \bm{\chi}_\Phi} \big\{S(\Be)\sum_{i=1}^s \nabla_\Phi \log(\Bp_\Phi^i(e_i))\big\},
\label{eqn:expectgrad}
\end{align}
where $\Bp_\Phi^i(e_i)$ is the probability corresponding to the sampled $e_i$. When the batch size is $N$ and $\{\Be^{(1)}, \Be^{(2)}, \cdots, \Be^{(N)}\}$ are sampled under $\bm{\chi}_\Phi$ each time, the expectation can be approximated by 
\begin{align}
\nabla_\Phi\mathcal{J}(\Phi)\approx \frac{1}{N}\sum_{k=1}^N \big\{S(\Be^{(k)})\sum_{i=1}^s \nabla_\Phi \log(\Bp_\Phi^i(e_i^{(k)}))\big\}.
\label{eqn:mcmcavg}
\end{align}
Next, the model parameter $\Phi$ is updated via the gradient ascent with a learning rate $\eta$, i.e., 
\begin{align}
\Phi \leftarrow \Phi+\eta \nabla_\Phi\mathcal{J}(\Phi). 
\label{eqn:gradientascent}
\end{align}
The objective in \eqref{eqn:expect} helps to improve the average score of generated sequences. In our problem, the goal is to find $\Be$ with the best score. To increase the probability of obtaining the best case, the objective function proposed in \citep{petersen2021deep} is applied to seek the optimal solution via
\begin{align}
\mathcal{J}(\Phi)=\mathbb{E}_{\Be \sim \bm{\chi}_\Phi} \{S(\Be)|S(\Be)\geq S_{\nu, \Phi}\},
\label{eqn:expectriskseeking}
\end{align}
where $S_{\nu, \Phi}$ represents the $(1-\nu)\times 100\%$-quantile of the score distribution generated by $\bm{\chi}_\Phi$. In a discrete form, the gradient computation becomes 
\begin{align}
\nabla_\Phi\mathcal{J}(\Phi)\approx \frac{1}{N}\sum_{k=1}^N \big\{(S(\Be^{(k)})-\hat{S}_{\nu, \Phi})\mathbbm{1}_{\{S(\Be^{(k)})\geq \hat{S}_{\nu, \Phi}\}}\sum_{i=1}^s \nabla_\Phi \log(\Bp_\Phi^i(e_i^{(k)}))\big\}.
\label{eqn:mcmcrisk}
\end{align}
where $\mathbbm{1}$ is an indicator function that takes value $1$ if the condition is true otherwise 0, and $\hat{S}_{\nu, \Phi}$ is the $(1-\nu)$-quantile of the scores $\{S(\Be^{(i)})\}_{i=1}^N$.

\subsubsection{Candidate Optimization} As introduced in Section~\ref{sec:score}, the score of $\Be$ is based on the optimization of a nonconvex function at a random initialization. Therefore, the optimization may get stuck at poor local minimizers and the score sometimes may not reflect whether $\Be$ reveals the structure of the true solution. The operator sequence $\Be$ corresponding to the true solution (or approximately) may not have the best score. For the purpose of not missing good operator sequences, a candidate pool $\mathbb{P}$ with capacity $K$ is maintained to store several $\Be$'s of a high score. 

During the search loop, if the size of $\mathbb{P}$ is less than the capacity $K$, $\Be$ will be put in $\mathbb{P}$. If the size of $\mathbb{P}$ reaches $K$ and $S(\Be)$ is larger than the smallest score in $\mathbb{P}$, then $\Be$ will be appended to $\mathbb{P}$ and the one with the least score will be removed. After the searching loop, for every $\Be\in \mathbb{P}$, the objective function $\mathcal{L}(u(\cdot; \mathcal{T}, \Be, \bm{\theta}))$ is optimized over $\bm{\theta}$ using a first-order algorithm with a small learning rate for $T_3$ iterations. 

\section{Numerical Examples}
\label{sec:numericalexp}
Numerical results will be provided to demonstrate the effectiveness of our FEX implementation introduced in Section~\ref{sec:farmework} using two classical PDE problems: high-dimensional PDEs with constraints (such as Dirichlet boundary conditions and integration constraints) and eigenvalue problems. The computational tools for high-dimensional problems are very limited and NNs are probably the most popular ones. Therefore, FEX will be compared with NN-based solvers. Through our examples, the goal is to numerically demonstrate that:
\begin{itemize}
\item {\bf Accuracy.} FEX can achieve high and even machine accuracy for high-dimensional problems, while NN-based solvers can only achieve the accuracy of $\mathcal{O}(10^{-4})$ to $\mathcal{O}(10^{-2})$. Furthermore, we demonstrate the effectiveness of our RL-based approach by comparing its performance to a solution developed using genetic programming (GP)~\citep{murray2012modelling}.

\item {\bf Scalability.} FEX is scalable in the problem dimension with an almost constant accuracy and a low memory requirement, i.e., the accuracy of FEX remains essentially the same when the dimension grows, while NN-based solvers have a worse accuracy when the dimension becomes larger. 

\item {\bf Interpretability.} FEX provides interpretable insights of the ground truth PDE solution and helps to design postprocessing techniques for a refined solution. 
\end{itemize}
In particular, to show the benefit of interpretability, we will provide examples to show that the explicit formulas of FEX solutions help to design better NN-parametrization in NN-based solvers to achieve higher accuracy. The FEX-aided NN-based solvers are referred to as FEX-NN in this paper. 
Finally, we will show the convergence of FEX with the growth of the tree size when the true solution can not be exactly reproduced by finite expressions using the available operators and a binary tree. All results of this section are obtained with $6$ independent experiments to achieve their statistics. 

\subsection{Experimental Setting}\label{sec:numerical} This part provides the setting of FEX and NN-based solvers. The accuracy of a numerical solution $\tilde{u}$  compared with the true solution $u$ is measured by a \textit{relative $L^2$ error}, i.e.,  $\|\tilde{u}-u\|_{L^2(\Omega)}/\|u\|_{L^2(\Omega)}$. The integral in the $L^2$ norm is estimated by the Monte Carlo integral for high-dimensional problems.

\textbf{Implements of FEX.} The depth-3 binary tree (Figure~\ref{fig:workflow}b) with 3 unary operators and 1 binary operator is used to generate mathematical expressions. The binary set is $\mathbbm{B}=\{+,-,\times\}$ and the unary set is $\mathbbm{U}=\{0, 1, \text{Id}, (\cdot)^2, (\cdot)^3, (\cdot)^4, \exp, \sin, \cos\}$. A fully connected NN is used as a controller $\bm{\chi}_\Phi$ with constant input (see Appendix~\ref{appendix:controller} for more details). The output size of the controller NN is $n_1|\mathbbm{B}|+n_2|\mathbbm{U}|$, where $n_1=1$ and $n_2=3$ represent the number of binary and unary operators, respectively, and $|\cdot|$ denotes the cardinality of a set.

There are four main parts in the implementation of FEX as introduced in Section \ref{sec:farmework}. We will only briefly describe the key numerical choices here. (1) \textit{Score computation.} The score is updated first by Adam with a learning rate 0.001 for $T_1=20$ iterations and then by BFGS with a learning rate 1 for maximum $T_2=20$ iterations. (2) \textit{Operator sequence generation.} The depth-3 binary tree (Figure~\ref{fig:workflow}b) with 3 unary operators and 1 binary operator is used to generate mathematical expressions. The binary set is $\mathbbm{B}=\{+,-,\times\}$ and the unary set is $\mathbbm{U}=\{0, 1, \text{Id}, (\cdot)^2, (\cdot)^3, (\cdot)^4, \exp, \sin, \cos\}$. A fully connected NN is used as a controller $\bm{\chi}_\Phi$ with constant input. The output size of the controller NN is $n_1|\mathbbm{B}|+n_2|\mathbbm{U}|$, where $n_1=1$ and $n_2=3$ represent the number of binary and unary operators, respectively, and $|\cdot|$ denotes the cardinality of a set. (3) \textit{Controller update.} The batch size for the policy gradient update is $N=10$ and the controller is trained for $1000$ iterations using Adam with a fixed learning rate $0.002$. Especially in Section of ``Numerical Convergence'', since the deeper trees are used, the controller is updated for 5000 iterations for trees with different depth. We adopt the $\epsilon$-greedy strategy to increase the exploration of new $\Be$. The probability $\epsilon$ of sampling an $e_i$ by random is $0.1$. (4) \textit{Candidate optimization.} The candidate pool capacity is set to be $K=10$. For any $\Be\in\mathbb{P}$, the parameter $\bm{\theta}$ is optimized using Adam with an initial learning rate $0.01$ for $T_3=20,000$ iterations. The learning rate decays according to the cosine decay schedule~\citep{he2019bag}. 

\textbf{Implements of NN-based Solvers.} Residual networks (ResNets)~\citep{he2016deep,yu2018deep} $u(\bm{x};\bm{\Theta})$ parameterized by $\bm{\Theta}$ are used to approximate a solution and a minimization problem ${\min}_{\bm{\Theta}}~ \mathcal{L}(u(\cdot;\bm{\Theta}))$ is solved to identify a numerical solution~\citep{sirignano2018dgm,weinan2021algorithms,yu2018deep}.

The ResNet maps from $\mathbb{R}^d$ to $\mathbb{R}^1$ and consists of seven fully connected layers with three skip connections. Each hidden layer contains 50 neurons. The neural network is optimized using the Adam optimizer with an initial learning rate of 0.001 for 15,000 iterations. The learning rate is decayed following a cosine decay schedule.

\textit{Poisson equation.} The coefficient $\lambda$ in the functional~\eqref{eqn:loss} is 100. The batch size for the interior and boundary is 5,000 and 1,000, respectively. In the NN method, we use the ReLU$^2$ activation, i.e., $(\max\{x, 0\})^2$, in ResNet to approximate the solution.  

\textit{Linear conservative law.} The coefficient $\lambda$ in the functional~\eqref{eqn:conservative} is 100. In the NN method, we use ReLU $(\max\{x, 0\})$ activation. The batch size for the interior and boundary is 5,000 and 1,000, respectively. We use the same batch size as the NN method except that we increase the batch size to 20,000 for the interior and 4,000 for the boundary when the dimension is not smaller than 36. 

\textit{Schr\"odinger equation.} The coefficient $\lambda$ in the functional~\eqref{eqn:shrodinger} is 1. The batch size for estimating the first term and second term of~\eqref{eqn:shrodinger} is 2,000 and 10,000, respectively. Besides, ReLU$^2$ is used in ResNet.

\textbf{Implements of genetic programming (GP).} GP is an evolutionary algorithm used for automated symbolic regression and the evolution of computer programs to solve complex problems. Through a process of selection, crossover, and mutation, GP evolves and refines these programs over generations to optimize a specific fitness or objective function. We utilize GP to optimize the CO~\eqref{eqn:co} directly using the Python package GPlearn~\citep{gplearn}. 

\subsection{High-dimensional PDEs} Several numerical examples for high-dimensional PDEs including linear and nonlinear cases are provided here. In these tests, the true solutions of PDEs have explicit formulas that can be reproduced by the binary tree defined in Section~\ref{sec:elementary} and $\mathbbm{B}, \mathbbm{U}$ defined in Section~\ref{sec:numerical}.

\begin{figure}[ht]
		\centering
        \includegraphics[width=0.80\linewidth]{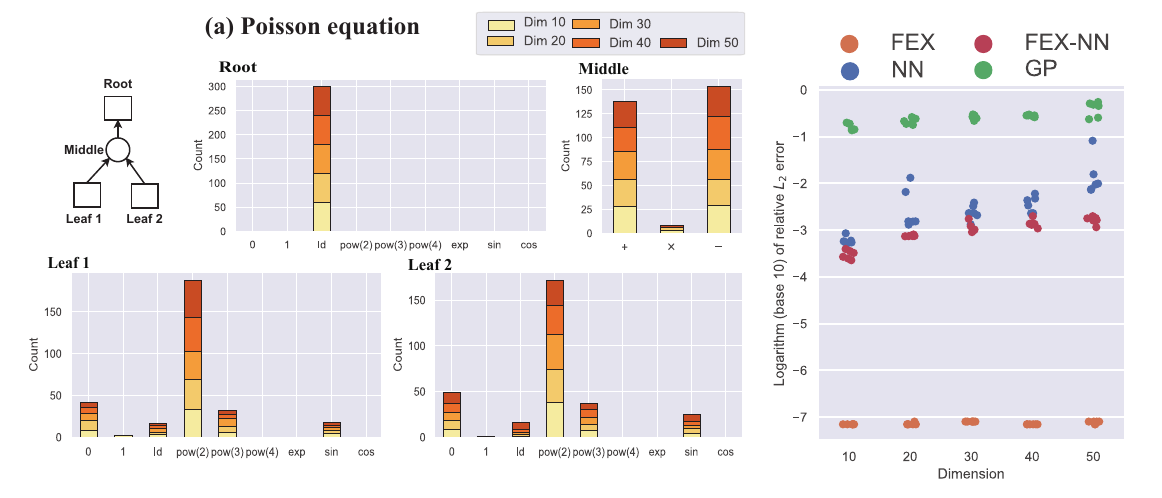}
        \includegraphics[width=0.80\linewidth]{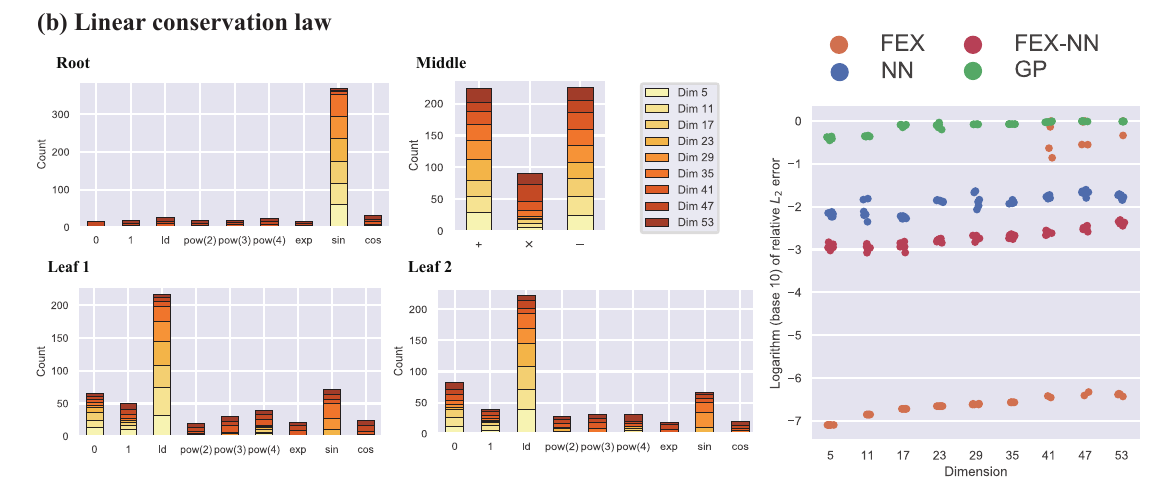}
        \includegraphics[width=0.80\linewidth]{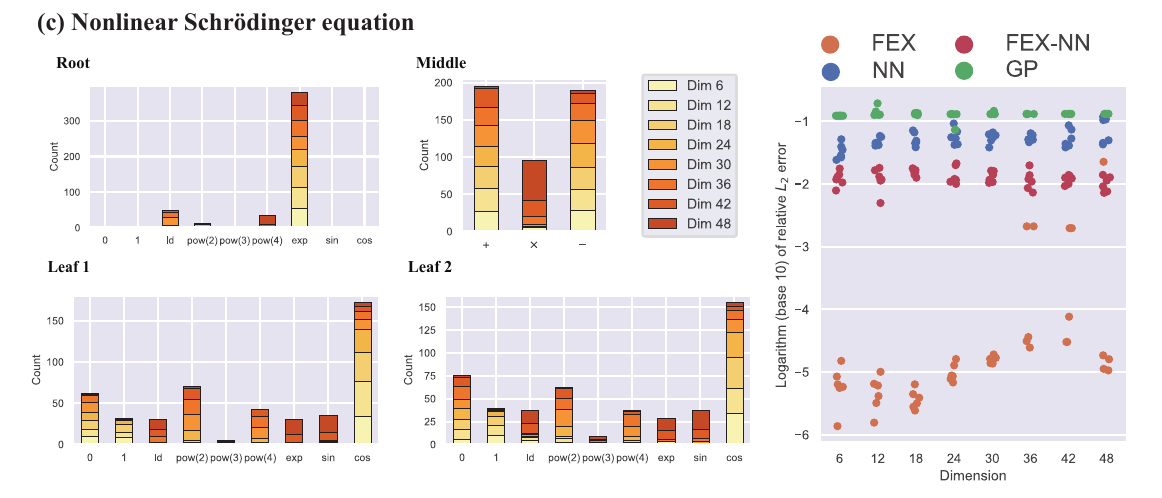}
		\caption{Distribution of the node values and the error comparison. In FEX, we  search the optimal sequence of node values and show the frequency of the node values of the binary tree in the candidate pool, consisting of the root (Root), middle node (Middle), and two leaves (Leaf 1 and Leaf 2). Based on the observation of the distribution, we readily design the new NN parameterization (FEX-NN) to estimate the solution. The last column displays comparison of the relative $L^2$ error as the function of the dimension over 6 independent trials between FEX, NN, FEX-NN and Genetic Programming (GP) for various high-dimensional PDE problems. Rows (a), (b) and (c) represent the results for Poisson equation~\eqref{eqn:poisson}, Linear conservation law~\eqref{eqn:conservationlaw1} and Nonlinear Schr\"odinger equation~\eqref{equ:schrodinger} respectively. For various dimensions, FEX identifies the true solution, approximating solutions of almost the machine accuracy. }
		\label{fig:depth3_tree_results}
\end{figure}

\subsubsection{Poisson Equation} 
We consider a Poisson equation~\citep{yu2018deep} with a Dirichlet boundary condition on a $d$-dimensional domain $\Omega=[-1,1]^d$, 
\begin{equation}
\label{eqn:poisson}
-\Delta u = f \text{ for }  \Bx \in \Omega, \quad u=g \text{ for }  \Bx \in \partial\Omega.
\end{equation}
Let the true solution be $\frac{1}{2}\sum_{i=1}^d x_i^2$, and then $f$ becomes a constant function $-d$. The functional $\mathcal{L}$ defined by least-square error \eqref{eqn:loss} is applied in the NN-based solver and FEX to seek the PDE solution for various dimensions ($d=10$, $20$, $30$, $40$ and $50$). 
\begin{figure}[ht]
		\centering
		\includegraphics[width=0.9\linewidth]{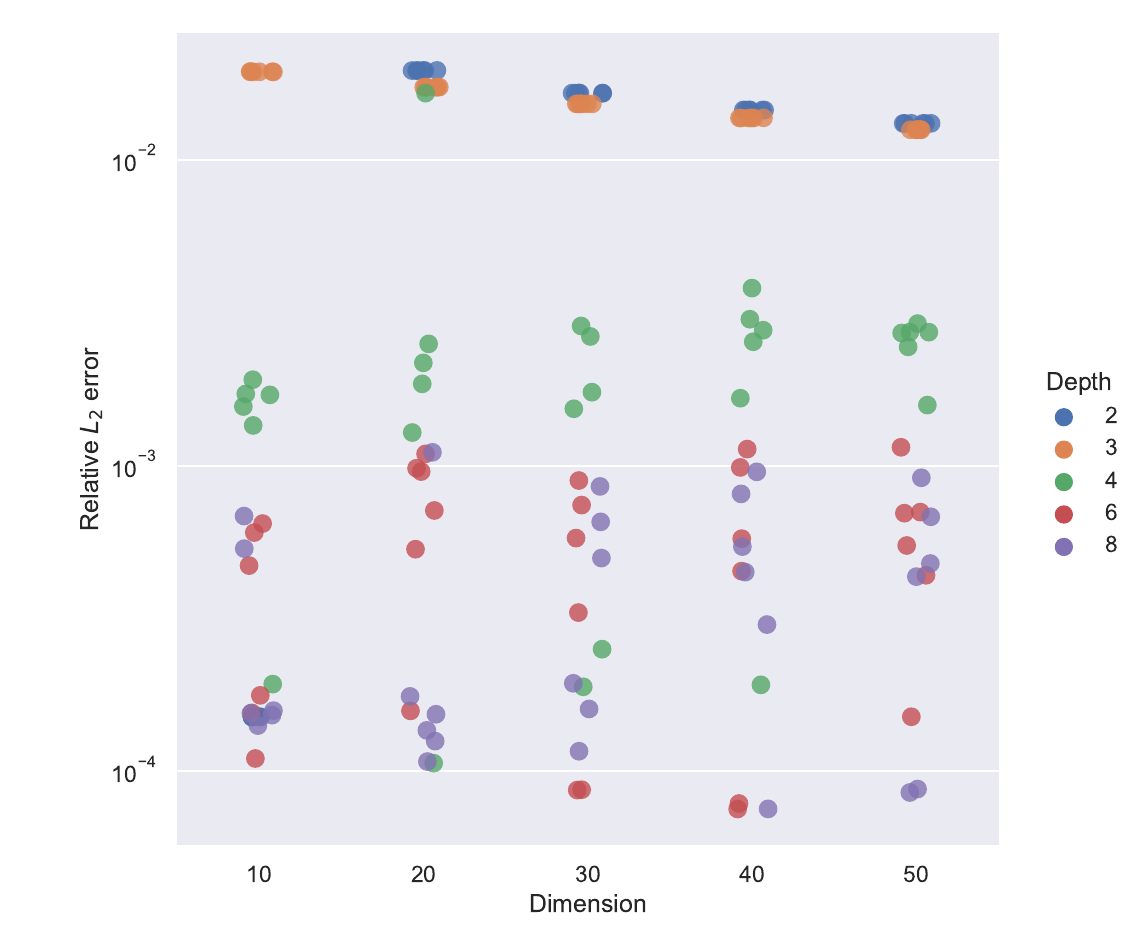}
		\caption{Relative $L_2$ error of solutions estimated by trees with increasing depth for the problems of various dimensions. We exclude the square operator $(\cdot)^2$ in the unary set $\mathbbm{U}$, and the binary tree defined in Section~\ref{sec:elementary} can not reproduce the true solution (sum of the square of coordinates) exactly in the example of the Poisson equation. We found that smaller errors could be obtained with larger tree sizes. }
		\label{fig:convergence}
\end{figure}

\subsubsection{Linear Conservation Law} The linear conservation law \citep{chen2021deep} is considered here with a domain $T\times\Omega=[0,1]\times[-1,1]^d$ and an initial value:

\begin{align}
\begin{split}
\label{eqn:conservationlaw1}
\frac{\pi d}{4}  u_t - \sum_{i=1}^d u_{x_i}= 0 \text{ for }  \Bx=(x_1,\cdots,x_d) \in \Omega, t\in [0,1], \quad u(0,\Bx) = \sin(\frac{\pi}{4} \sum_{i=1}^d x_i) \text{ for }  \Bx \in \Omega,
\end{split}
\end{align}
where the true solution is $u(t,\Bx) = \sin(t+\frac{\pi}{4}\sum_{i=1}^d x_i)$, and $d=5$, $11$, $17$, $23$, $29$, $35$, $41$, $47$ and $53$ in our tests. The functional $\mathcal{L}$ used in the NN-based solver and FEX to identify the solution is defined by 
\begin{align}
    \mathcal{L}(u):=\|\frac{\pi d}{4}  u_t - \sum_{i=1}^d u_{x_i}\|_{L^2(T\times\Omega)}^2+\lambda\|u(0,\Bx) - \sin(\frac{\pi}{4} \sum_{i=1}^d x_i)\|_{L^2(\Omega)}^2.
    \label{eqn:conservative}
\end{align}
\subsubsection{Nonlinear Schr\"odinger Equation}
We consider a nonlinear Schr\"odinger equation~\citep{han2020solving} with a cubic term on a $d$-dimensional domain $\Omega=[-1,1]^d$,
\begin{equation}
\label{equ:schrodinger}
	\begin{aligned}
		- \Delta u + u^3 + V u &= 0 \text{ for }  \Bx \in \Omega,
	\end{aligned}
\end{equation}  
where $V(\Bx) = -\frac{1}{9}\exp(\frac{2}{d}\sum_{i=1}^d\cos x_i)+ \sum_{i=1}^d(\frac{\sin^2 x_i}{d^2}- \frac{\cos x_i}{d})$ for $\Bx=(x_1,\cdots,x_d)$. And we let $\hat{u}(\Bx) = \exp( \frac{1}{d}\sum_{j=1}^d\cos(x_j ))/3$ be the solution of the PDE~\eqref{equ:schrodinger}. To avoid the trivial zero solution, we apply different strategies during the score computation and candidate optimization phases. During the score computation phase, the norm of the test function $u$, i.e., $\|u\|_{L_2(\Omega)}$, is used as a penalty for the function that is close to zero by 
\begin{align}
    \mathcal{L}_1(u):=\|- \Delta u + u^3 + V u  \|_{L_2(\Omega)}^2/\|u\|_{L_2(\Omega)}^3.
    \label{eqn:shrodinger0}
\end{align}

During the candidate optimization phase, an integration constraint is imposed to~\eqref{equ:schrodinger}, i.e., $\int_{\Omega} u(\Bx) d\Bx = \int_{\Omega} \hat{u}(\Bx) d\Bx$. The functional $\mathcal{L}$ used in FEX to fine-tune the identified operator sequence is defined by 
\begin{align}
    \mathcal{L}_2(u):=\|- \Delta u + u^3 + V u  \|_{L_2(\Omega)}^2+\lambda\big( \int_{\Omega} u(\Bx) d\Bx-\int_{\Omega} \hat{u}(\Bx) d\Bx\big)^2,
    \label{eqn:shrodinger}
\end{align}
where the second term imposes the integration constraint. The functional~\eqref{eqn:shrodinger} is also used in the NN-based solver to approximate the PDE solution.  
 Various dimensions are tested in the numerical results, e.g., $d=6$, $12$, $18$, $24$, $30$, $36$, $42$ and $48$. Remark that we avoid using~\eqref{eqn:shrodinger} in the score computation because the Monte-Carlo error tends to be significant in the second term of~\eqref{eqn:shrodinger} when the batch size is small, but using a large batch size can lead to inefficient computations. Hence, for the computational efficiency, we will utilize~\eqref{eqn:shrodinger0} instead in the score computation, rather than~\eqref{eqn:shrodinger}.

\subsection{Results}

Three main sets of numerical results for the PDE problems above will be presented. First, the errors of the numerical solutions by NN-based solvers and FEX are compared. Second, a convergence test is analyzed when the tree size of FEX increases. Finally, FEX is applied to design special NN parametrization to solve PDEs in NN-based solvers.

\textbf{Estimated Solution Error}. The depth-3 binary tree (Figure~\ref{fig:trees}) is used in FEX with four nodes (a root node (R), a middle node (M), and two leave nodes (L1 and L2)). Figure~\ref{fig:depth3_tree_results} shows the operator distribution obtained by FEX and the error comparison between the NN method and our FEX. The results show that NN solutions have numerical errors between $\mathcal{O}(10^{-4})$ and $\mathcal{O}(10^{-2})$ and the errors grow in the problem dimension $d$, agreeing with the numerical observation in the literature. Meanwhile, FEX can identify the true solution structure for the Poisson equation and the linear conservation law with errors of order $10^{-7}$, reaching the machine accuracy since the single-float precision is used. In the results of the nonlinear Schr\"odinger equation, FEX identifies the solutions of the form $\exp(\cos(\cdot))$ but achieves errors of order $10^{-5}$. 
Note that $\int_{\Omega} \hat{u}(\Bx) d\Bx$ in \eqref{eqn:shrodinger} is estimated by the Monte-Carlo integration with millions of points as an accurate and precomputed constant, but $\int_{\Omega} u(\Bx) d\Bx$ can only be estimated with fixed and small batch size, typically less than 10,000, in the optimization iterations. As the dimension grows, the estimation error of $\int_{\Omega} u(\Bx) d\Bx$ increases, and, hence, even the ground true solution has an increasingly large error according to \eqref{eqn:shrodinger}. Therefore, the optimization solver may return an approximate solution without machine accuracy. Designing a functional $\mathcal{L}$ free of the Monte-Carlo error (e.g., Eqns. \eqref{eqn:loss} and \eqref{eqn:conservative}) for the nonlinear Sch\"odinger equation could ensure machine accuracy.  
Furthermore, as shown in Figure~\ref{fig:depth3_tree_results}, GP tends to identify solutions of lower quality that fail to achieve a high level of accuracy. 

\textbf{Numerical Convergence.} The numerical convergence analysis is performed using the Poisson equation as an example. Binary trees of depths 2, 3, 4, 6 and 8 are used (see Figure~\ref{fig:trees}). The square operator $(\cdot)^2$ is excluded in $\mathbbm{U}$ so that the binary tree defined in Section~\ref{sec:elementary} can not reproduce the true solution (sum of the square of coordinates) exactly. This setting can mimic the case of a complicated solution while a small binary tree was used. Figure~\ref{fig:convergence} shows the error distribution of FEX with the growth of dimensions and the change of tree depths. FEX obtains smaller errors with increasing tree size. Notice that, compared with the errors of NN-based solvers reported in Figure~\ref{fig:depth3_tree_results}, FEX gets a higher accuracy when a larger tree is used. These results underscore FEX's ability to identify better solutions without relying on any special construction and provide evidence that its performance is not simply due to biases in the construction, such as tree structure or operator set.

\textbf{FEX-NN}. FEX provides interpretable insights of the ground truth PDE solution by the operator distribution obtained from the searching loop. It may be beneficial to design NN models with a special structure to increase the accuracy of NN-based solvers. In the results of the Poisson equation, we observe that the square operator has a high probability to appear at the leave nodes, which suggests that the true solution may contain the structure $\Bx^2:=(x_1^2, \cdots, x_d^2)$ at the input $\Bx$. As a result, we define the FEX-NN by $v(\Bx^2;\bm{\Theta})$ for the Poisson equation. Similarly, we use FEX-NNs $\sin(v(\Bx;\bm{\Theta}))$ for the linear conservation law and $\exp(v(\Bx^2;\bm{\Theta}))$ for the nonlinear Schr\"odinger equation. Figure~\ref{fig:depth3_tree_results} shows the errors of FEX-NN with the growth of dimensions, and it is clear that FEX-NN outperforms the vanilla NN-based method by a significant margin. 



\subsection{Eigenvalue Problem}
\label{sec:eigen}
Consider identifying the smallest eigenvalue $\gamma$ and the associated eigenfunction $u$ of the eigenvalue problem~\citep{yu2018deep}, 
\begin{align}
\label{eqn:eigenvalue}
\begin{split}
-\Delta u+ w u = \gamma u, \quad \Bx \in \Omega , \quad \text{for} \ 
u = 0 \ \text{on} \ \partial\Omega .
\end{split}
\end{align}
    The minimization of the Rayleigh quotient $\mathcal{I}(u)=\frac{\int_\Omega \|\nabla u\|_2^2d\Bx+\int_\Omega w  u^2 d\Bx}{\int_\Omega u^2 d\Bx}$, s.t., $u|_{\partial\Omega}=0$, gives the smallest eigenvalue and the corresponding eigenfunction. In the NN-based solver~\citep{yu2018deep}, the following functional is defined 
\begin{align}
\label{eqn:eigenvalueloss}
\begin{split}
\mathcal{L}(u):=\mathcal{I}(u)+\lambda_1 \int_{\partial \Omega} u^2 d\Bx + \lambda_2 \big(\int_{\Omega} u^2 d\Bx-1\big)^2
\end{split}
\end{align}
to seek an NN solution. 

\subsubsection{FEX vs NN for non-analytic eigenfunctions}
We consider a potential $w(\Bx)=\|\Bx\|_2^2+\delta \sum_{i=1}^d x_i^4$ and $\Omega=\mathbb{R}^d$, which incorporates a quartic perturbation term $\delta \sum_{i=1}^d x_i^4$ with a small $\delta\geq 0$. The domain $\Omega$ is truncated from $\mathbb{R}^d$ to $[-3,3]^d$ for simplification as done in~\citep{yu2018deep}. When $\delta=0$, the smallest eigenvalue of (\ref{eqn:eigenvalue}) is $d$ and the associated eigenfunction is $\exp(-\frac{\|\Bx\|_2^2}{2})$. Potentials with quartic terms (e.g., $\delta>0$) can lead to eigenfunctions in non-analytic form. While the eigenfunction corresponding to the smallest eigenvalue can be non-analytic, perturbation theory suggests that the true smallest eigenvalue can be approximated by $n+3/4n\delta$ (with details in Appendix~\ref{sec:eignref}), which we use as a reference for the true smallest eigenvalue. 

 We consider a small $\delta=0.1$. Both the FEX and NN methods use the functional~\eqref{eqn:eigenvalueloss} to estimate the eigenvalue. We used the same batch size across different dimensions to discretize the integration over the domain, as well as the integration over the boundary. Specifically, the batch size for estimating the first term and third term of the objective function~\eqref{eqn:eigenvalueloss} is
500,000 while that of the second term (boundary) is 100,000. Figure~\ref{fig:per_eigen} compares FEX and the NN method to approximate the smallest eigenvalue for $d=10,12,14,16$ and $18$ using the Rayleigh quotient~\eqref{eqn:eigenvalueloss}. As the dimension $d$ increases, the smallest eigenvalues identified by the NN method deviate significantly from the reference value, whereas those obtained using FEX remain much closer to it. Furthermore, the FEX method demonstrates greater stability, with substantially smaller standard deviation in the identified smallest eigenvalues.

\begin{figure}[h]
		\centering
		\includegraphics[width=0.9\linewidth]{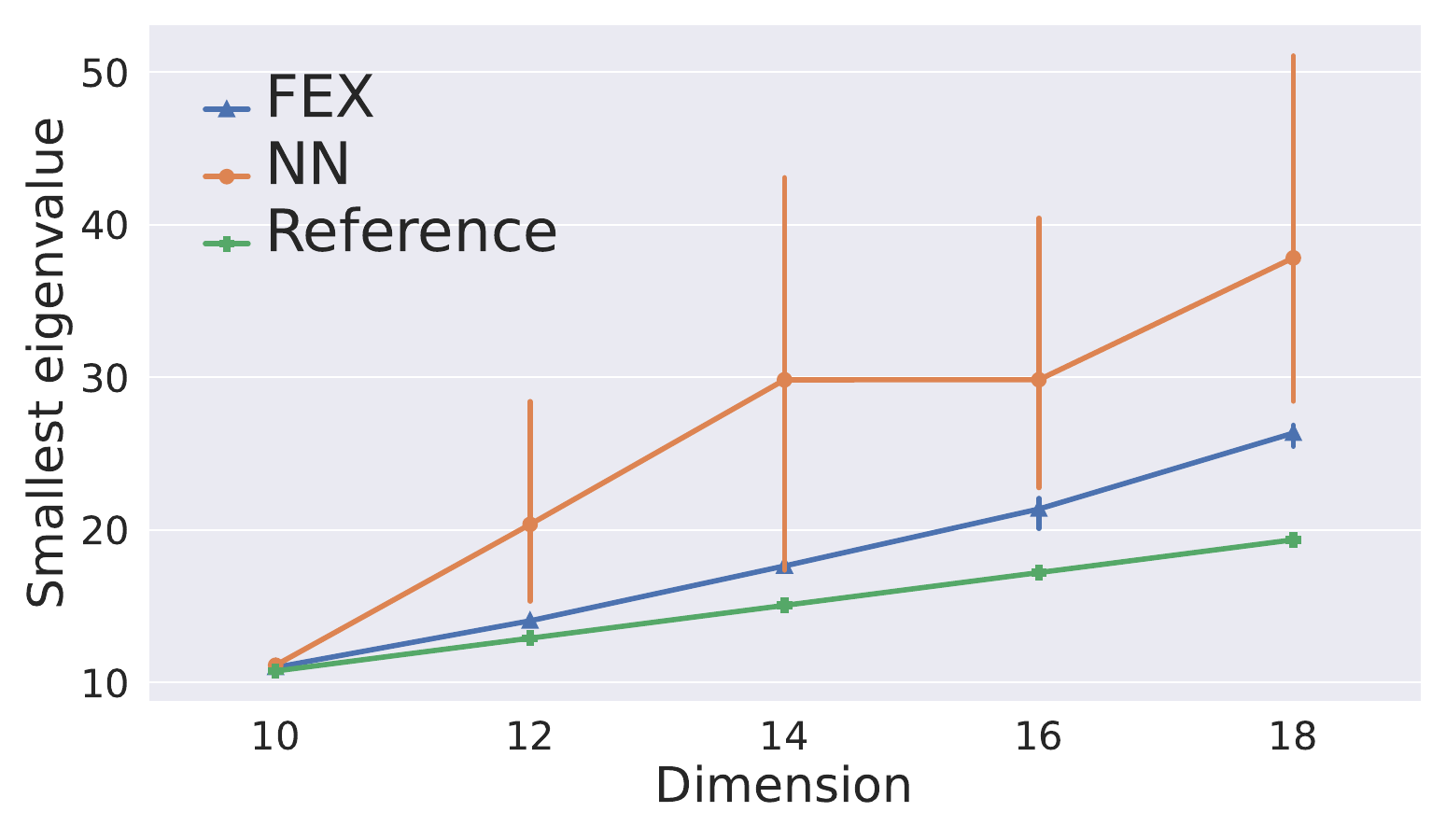}
		\caption{Comparison of the FEX method and the NN approach for approximating the smallest eigenvalue of \eqref{eqn:eigenvalue} using the Rayleigh quotient with the given potential $w(\Bx)=\|\Bx\|_2^2 + \delta \sum_{i=1}^d x_i^4$ and $\delta=0.1$ across different dimensions. The vertical line segments represent one standard deviation. }
		\label{fig:per_eigen}
\end{figure}

\subsubsection{Development of postprocessing techniques with FEX} Here, we demonstrate how FEX can facilitate the development of postprocessing techniques to achieve more refined results. For instance, consider the case where of $w=\|\Bx\|_2^2$ and $\Omega=\mathbb{R}^d$. In this scenario, the smallest eigenvalue of (\ref{eqn:eigenvalue}) is $d$, and the corresponding eigenfunction is $\exp(-\frac{\|\Bx\|_2^2}{2})$. We examine dimensions $d=2,4,6,8,10$. Given the lower dimensionality compared to the previous section, we can use smaller batch sizes: specifically, the batch size for estimating the first and third terms of the objective function~\eqref{eqn:eigenvalueloss} is 10,000, while that for the second term (boundary) is 2,000. 

 FEX discovers a high probability to have the ``$\exp$'' operator at the tree root ($100$\% for $d=2$, $4$, $6$, $8$, and $93.3$\% for $d=10$ as shown in Figure~\ref{fig:eigenvalues}).  Therefore, it is reasonable to assume that the eigenfunction is of the form $\exp(v(\Bx))$. Let $u(\Bx)$ be $\exp(v(\Bx))$ and then Eqn.~\eqref{eqn:eigenvalue} is simplified to 
\begin{align}
    -\Delta v-\|\nabla v\|_2^2 + \|\Bx\|_2^2 = \gamma.
    \label{eqn:eigensimplify}
\end{align} 
Eqn.~\ref{eqn:eigensimplify} does not have a trivial zero solution so we can avoid the integration constraint used in Eqns.~ \eqref{eqn:shrodinger} and \eqref{eqn:eigenvalueloss}, which leads to Monte-Carlo errors.  Using Eqn. \eqref{eqn:eigensimplify} and the Rayleigh quotient $\mathcal{I}$, the values of $v$ and $\gamma$ are alternatively updated until they reach convergence.  The detail of this iterative algorithm is presented in Appendix~\ref{sec:iterative}. Figure~\ref{fig:eigenvalues} shows the relative absolute error of the estimated eigenvalues with the growth of the dimensions. We can see that directly optimizing~\eqref{eqn:eigenvalueloss} with the NN method produces a large error on the eigenvalue estimation, especially when the dimension is high (e.g., the relative error is up to 20\% when $d=10$). With the postprocessing algorithm with FEX, we can identify the eigenvalue with an error close to zero.

\begin{figure}[ht]
		\centering
		\includegraphics[width=0.9\linewidth]{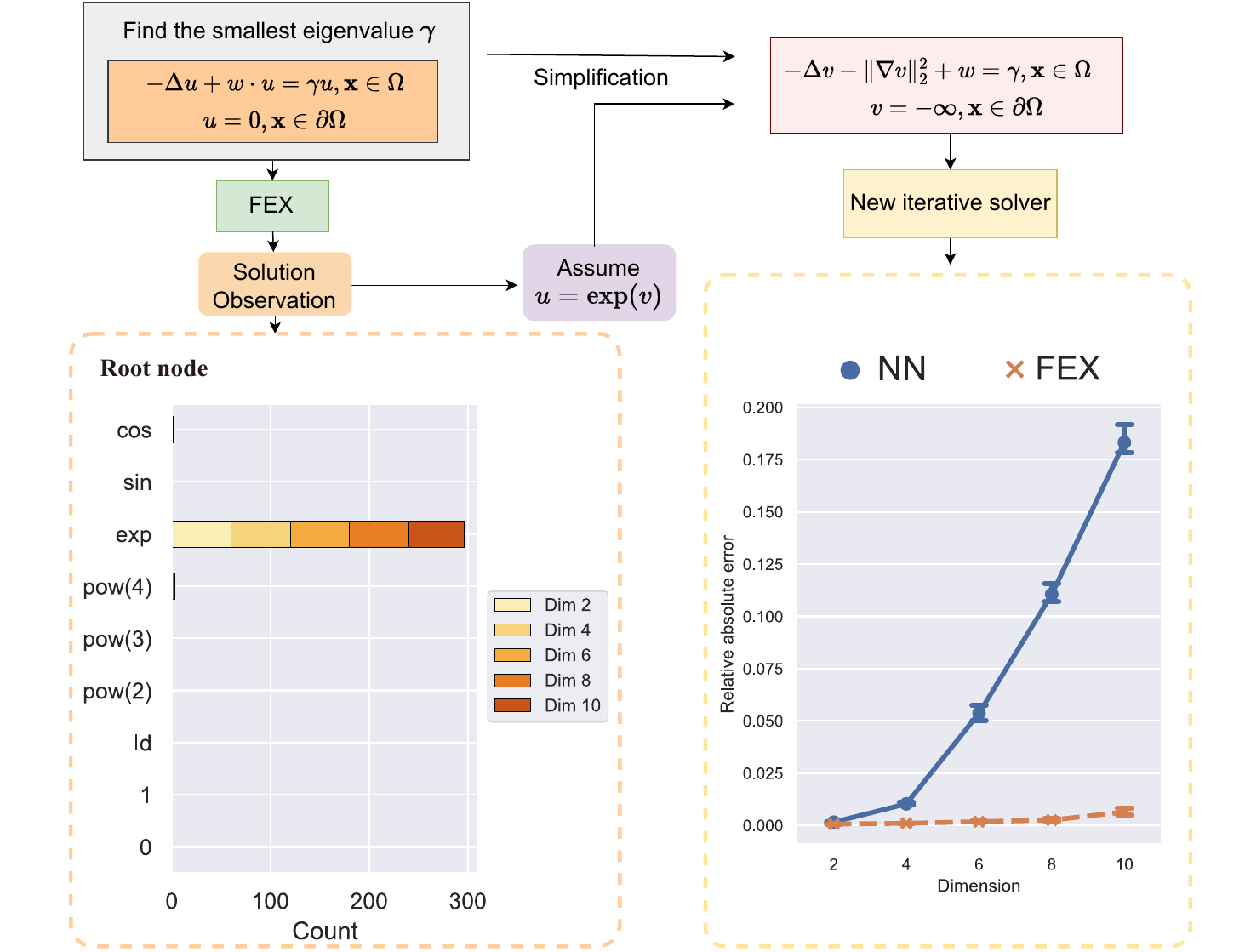}
		\caption{Eigenvalue problems and postprocessing algorithm design with FEX. Bottom left: We observed that the exponent operator ``$\exp(\cdot)$'' dominates the tree root in the FEX searching loop. Based on this observation, we assume the solution is $\exp(v(\Bx))$ and simplify the original PDE to a new PDE that avoids the trivial solution. Bottom right: The NN-based method produces a large error on the eigenvalue estimation, especially when the dimension is high ($d = 10$).
With the postprocessing algorithm with FEX, we can identify the eigenvalue with an error close to zero.}
		\label{fig:eigenvalues}
\end{figure}

\section{Discussion and Perspectives}\label{sec:cond}

In this paper, we proposed the finite expression method - a methodology to find PDE solutions as simple mathematical expressions. Our theory showed that mathematical expressions can overcome the curse of dimensionality. We provided one implementation of representing mathematical expressions using trees and solving the formulated combinatorial optimization in FEM using reinforcement learning. Our results demonstrated effectiveness of FEM at achieving high, even machine-level accuracy on various high-dimensional PDEs while existing solvers suffered from low accuracy in comparison.

While the proposed FEX solver is capable of identifying finite expressions with machine-level accuracy for several classes of high-dimensional PDEs, its computational cost remains significant. This cost is primarily divided into two phases: the searching phase and the fine-tuning phase.

\begin{itemize}
\item While the fine-tuning phase in FEX resembles the procedure used in NN methods for optimizing the parameters of surrogate models, it is significantly more efficient. This efficiency arises from the substantially smaller number of parameters in the FEX method compared to NN models. Specifically, the number of parameters in FEX is approximately ${d\times2^{L/2-1}+2^{L/2+1}}$ where $d$ is the input dimension and $L$ is the depth of the tree.  In most of the experiments presented in Section~\ref{sec:numericalexp}, we used $L\leq 4$ . In contrast, the number of parameters for the NN model is about $dm+(\ell-2) m^2+m$
where $m$ represents the size of the hidden layer and
$\ell$ is the number of hidden layers. For the NN models used in Section~\ref{sec:numericalexp}, we set $m=50$ and $\ell=7$. 

\item The searching phase in the FEX method introduces a significant computational overhead. This phase involves RL-based optimization of the controller and the evaluation of operator sequence scores, both of which are computationally intensive. However, the method's flexibility in configuring the searching process allows for balancing cost and performance.

Users can adjust various hyperparameters to tailor the computational effort to specific problem requirements or resource constraints. These hyperparameters include the number of iterations for the RL-based optimization of the controller, the number of iterations used to estimate the score function for an operator sequence, the depth of the binary tree structure, and the batch size.

Furthermore, certain components of the FEX algorithm can be parallelized to enhance efficiency. For instance, the score estimation for each operator sequence within a batch can be performed in parallel, and the final candidate fine-tuning process is also amenable to parallelization.

    \item If a complete binary tree is used (consisting of alternating layers of unary and binary nodes), the number of operators grows approximately as $2^{L/2}$, where $L$ is the tree depth. Empirically, we observed an exponential increase in optimization time with increasing tree depth, which aligns with the exponential growth in the number of operators. However, in practice, it is often unnecessary to use extremely deep or complete binary trees. Instead, shallower or pruned trees can provide a good balance. 
\end{itemize}

While CO is inherently a complex challenge, continued exploration can be highly advantageous in achieving improved performance, particularly when dealing with more intricate PDE problems within the framework of FEX.

\begin{itemize}
    \item \textbf{Efficient computation of operator sequence score. } We employed Formulation~\eqref{eqn:score} as a proxy for assessing the quality of an operator sequence. However, it's important to note that this formulation necessitates optimization over multiple steps, resulting in non-negligible computational costs that can impact the overall expense of solving the CO problem in FEX. Therefore, it is crucial to define a more efficient scoring method that reduces computational expenses and simplifies the identification of favorable operator sequences.
    
    \item \textbf{Design of the controller.} As an illustrative example of CO solving in this work, we employed a straightforward fully connected network to model the distribution responsible for proposing operator selections. It's worth noting that more sophisticated techniques can also be applied, such as recurrent neural networks~\citep{petersen2021deep}, which take prior decisions as input and generate new decisions as output. These advanced methods can offer enhanced modeling capabilities and adaptability to the context of the problem.
\end{itemize}
\section*{Data and Code Availability} No date is generated in this work. Source codes for reproducing the results in this paper are available online at: \url{https://github.com/LeungSamWai/Finite-expression-method}. The source codes are released under MIT license.


\acks{H. Yang was partially supported by the US National Science Foundation under awards DMS-2244988, DMS-2206333, the Office of Naval Research Award N00014-23-1-2007, and the DARPA D24AP00325-00. S. Liang acknowledges partial support from a startup grant by Texas Tech University. S. Liang acknowledges the helpful suggestion from Dr. Yuanran Zhu from Lawrence Berkeley National Laboratory on the eigenvalue problems.}


\newpage

\appendix
\section{Proofs of Theorems} \label{sec:proof}
\begin{proof}[Proof of Theorem \ref{thm:main}]
By Theorem 1.1 of \cite{Shen2021DeepNA}, for any $f\in C([a,b]^d)$ as a continuous function on $[a,b]^d$, there exists a fully connected neural network (FNN) $\phi$ with width $N=36d(2d+1)$ and depth $L=11$ (i.e., $11$ hidden layers) such that, for an arbitrary  $\varepsilon>0$, $\|\phi-f\|_{L^\infty([a,b]^d)}<\varepsilon$. This FNN is constructed via an activation function with an explicit formula $\sigma(x)=\sigma_1(x):=\big|x-2\lfloor \tfrac{x+1}{2}\rfloor\big|$ for $x\in[0,\infty)$ and $\sigma(x)=\sigma_2(x):=\frac{x}{|x|+1}$ for $x\in(-\infty,0)$. Therefore, $\sigma(x) = \frac{\text{sign}(x)+1}{2}\sigma_1(x) - \frac{\text{sign}(x)-1}{2}\sigma_2(x)$. Hence, it requires at most $18$ operators to evaluate $\sigma(x)$. For an FNN of width $N$ and depth $L$, there are $N(d+1)+(L-1)N^2$ operators ``$\times$", $Nd-1+(L-1)N(N-1)$ operators ``$+$", and $NL$ evaluations of $\sigma(x)$ to evaluate an output of the FNN. Therefore, the FNN $\phi$ is a mathematical expression with at most $k_d:=103680d^4 + 103824d^3 + 39600d^2 + 6804d - 1=\mathcal{O}(d^4)$ operators. Therefore, for any $\varepsilon>0$, any continuous function $f$ on $[a,b]^d$, there is a $k_d$-finite expression that can approximate $f$ uniformly well on $[a,b]^d$ within $\varepsilon$ accuracy. Since $k_d$ is independent of $\varepsilon$, it is clear that the function space of $k_d$-finite expressions is dense in $C([a,b]^d)$.
\end{proof}

\begin{proof}[Proof of Theorem \ref{thm:main2}]
By Cor. 3.8 of~\cite{Jiao2021DeepNN}, let $p\in[1,+\infty)$, for any $f\in \mathcal{H}^\alpha_\mu([0,1]^d)$ and $\varepsilon>0$, there exists an FNN $\phi$ with width
\begin{align*}
N=\max\{2d\left\lceil \log_2\left(\sqrt{d}\left( \frac{3\mu}{\varepsilon}\right)^{1/\alpha}\right)\right\rceil,2\left\lceil \log_2\frac{3\mu d^{\alpha/2}}{2\varepsilon} \right\rceil+2\}
\end{align*}
and depth $L=6$ such that $\|\phi-f\|_{L^p([0,1]^d)}<\varepsilon$. This FNN is constructed via activation functions chosen from the set $\{\sin(x),\max\{0,x\},2^x\}$. Similar to the proof for Theorem \ref{thm:main}, there are $N(d+1)+(L-1)N^2$ operators ``$\times$", $Nd-1+(L-1)N(N-1)$ operators ``$+$", and $NL$ evaluations of activation functions to evaluate an output of the FNN. Therefore, the total number of operators in $\phi$ as a mathematical expression is $\mathcal{O}(d^2(\log d+\log\frac{1}{\varepsilon})^2)$, which completes the proof.
\end{proof}

\section{Algorithm to Solve CO with Reinforcement Learning}\label{appendix:code}
We have included the pseudo-code for the proposed FEX implementation in Algorithm~\ref{alg:workflow} and Algorithm~\ref{alg:expandingtree}. Specifically, Algorithm~\ref{alg:workflow} outlines the procedure for solving the CO problem using a predefined fixed tree. On the other hand, Algorithm~\ref{alg:expandingtree} is designed to iteratively expand a tree, thereby enhancing its expressiveness in the pursuit of identifying an improved solution based on the approach described in Algorithm~\ref{alg:workflow}. 

\begin{algorithm}[ht]  
    \caption{FEX with a fixed tree}  
    \label{alg:workflow} 
    \textbf{Input:} PDE and the associated functional $\mathcal{L}$; A tree $\mathcal{T}$; Searching loop iteration $T$; Coarse-tune iteration $T_1$ with Adam; Coarse-tune iteration $T_2$ with BFGS; Fine-tune iteration $T_3$ with Adam; Pool size $K$; Batch size $N$.
    
    \textbf{Output:} The solution $u(\bm{x};\mathcal{T},\hat{\Be},\hat{\bm{\theta}})$.
    \begin{algorithmic}[1]
        \State Initialize the agent $\bm{\chi}$ for the tree $\mathcal{T}$
        \State $\mathbb{P} \gets \{\}$
        \For{$\_$ from $1$ to $T$}
        \State Sample $N$ sequences $\{\Be^{(1)}, \Be^{(2)}, \cdots, \Be^{(N)}\}$ from $\bm{\chi}$
        \For{$n$ from $1$ to $N$}
        \State Optimize $\mathcal{L}(u(\bm{x};\mathcal{T},\Be^{(n)},\bm{\theta}))$ through coarse-tuning over $T_1+T_2$ iterations to get score $S(\Be^{(n)})$
        \If{$\Be^{(n)}$ belongs to the top-$K$ of all scorings in $\mathbb{P}$}
        \State $\mathbb{P}$.append($\Be^{(n)}$)
        \State $\mathbb{P}$ pops some $\Be$ of the smallest score when $|\mathbb{P}|>K$
        \EndIf
        \EndFor
        \State Update $\bm{\chi}$ using~\eqref{eqn:mcmcrisk}
        \EndFor
            \For{$\Be$ in $\mathbb{P}$}  
                \State Fine-tune $\mathcal{L} (u(\bm{x};\mathcal{T},\Be,\bm{\theta}))$ with $T_3$ iterations.
            \EndFor
        \State \Return the expression with the smallest fine-tune error. 
    \end{algorithmic}
\end{algorithm} 

\begin{algorithm}[ht]  
    \caption{FEX with progressively expanding trees}  
    \label{alg:expandingtree} 
    \textbf{Input:} Tree set $\{\mathcal{T}_1, \mathcal{T}_2, \cdots \}$; Error tolerance $\epsilon$;
    
    \textbf{Output:} the solution $u(\bm{x};\hat{\mathcal{T}},\tilde{\Be},\tilde{\bm{\theta}})$.
    \begin{algorithmic}[1]
        \For{$\mathcal{T}$ in $\{\mathcal{T}_1, \mathcal{T}_2, \cdots \}$}  
        \State Initialize the agent $\bm{\chi}$ for the tree $\mathcal{T}$
        \State Obtain $u(\bm{x};\mathcal{T},\hat{\Be},\hat{\bm{\theta}})$ from Algorithm~\ref{alg:workflow}
        \If{$\mathcal{L}(u(\cdot;\mathcal{T},\hat{\Be},\hat{\bm{\theta}}))$ $\leq \epsilon$}
        \State Break
        \EndIf
        \EndFor
        \State \Return the expression with the smallest functional value. 
    \end{algorithmic}  
\end{algorithm} 

\section{Using a Fully Connected Network to Model a Controller}
\label{appendix:controller}
We illustrate the use of a fully connected neural network to model the controller, which generates probability mass functions for each node within a tree, in Figure~\ref{fig:fc_controller}. As an example, assume that we have a tree with three nodes: $n_1=2$ nodes for the binary set with $|\mathbbm{B}|=2$ operators, and $n_2=1$ node for the unary set with $|\mathbbm{U}|=3$ operators. Consequently, the output size of the neural network is $n_1|\mathbbm{B}|+n_2|\mathbbm{U}|=7$. This output is divided into $n_1+n_2$ parts, each corresponding to the probability mass function for its respective node. In addition, the input to the neural network is simply a constant vector.

\begin{figure}[ht]
		\centering

        \includegraphics[width=0.5\linewidth]{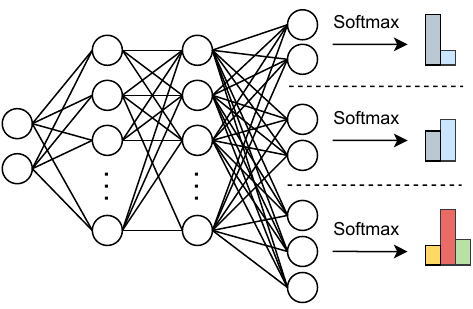}
		\caption{Illustration of using a fully connected neural network to model the controller that outputs the probability mass functions for each of the node within a tree. As an example, the tree contains three nodes: $n_1=2$ nodes for the binary set with $|\mathbbm{B}|=2$ operators and $n_2=1$ nodes for the unary set with $|\mathbbm{U}|=3$ operators. }
		\label{fig:fc_controller}
\end{figure}

\section{Eigenvalue Reference}\label{sec:eignref} 
 
When $w(\Bx)=\|\Bx\|_2^2$, the smallest eigenvalue of (\ref{eqn:eigenvalue}) is $d$ and the associated eigenfunction is $\phi(\Bx):=\exp(-\frac{\|\Bx\|_2^2}{2})$. When considering $w(\Bx)=\|\Bx\|_2^2+\delta \sum_{i=1}^d x_i^4$, the Rayleigh quotient of $\phi$ gives 
\begin{align}
\label{eqn:eigenvalue771}
\begin{split}
\mathcal{I}(\phi)&=\frac{\int_\Omega \|\nabla \phi\|_2^2dx+\int_\Omega (\|\Bx\|_2^2+\delta \sum_{i=1}^d x_i^4)  \phi^2 dx}{\int_\Omega \phi^2 dx}
\\&=d+\delta\frac{\int_\Omega \sum_{i=1}^d x_i^4  \phi^2 dx}{\int_\Omega \phi^2 dx}
\\&=d+\delta \frac{\int_\Omega \sum_{i=1}^d x_i^4  \exp(-\|\Bx\|_2^2) dx}{\int_\Omega \exp(-\|\Bx\|_2^2) dx}
\\&=d+\delta d \frac{\int_{-\infty}^\infty x_i^4\exp(-x_i^2)dx }{\int_{-\infty}^\infty \exp(-x_i^2)dx}=d+\delta d \frac{\Gamma(5/2)}{\Gamma(1/2)}=d+3/4 \delta d,
\end{split}
\end{align}
where $\Gamma$ represents the Gamma function. Therefore, the smallest eigenvalue should be less than $\mathcal{I}(\phi)=d+3/4 \delta d$.

\section{FEX-inspired Iterative Method for Eigenpairs}
\label{sec:iterative}
First, by solving Eqn.~\eqref{eqn:eigenvalueloss} with our FEX, we obtain an estimated eigenfunction $u(\Bx; \mathcal{T}, \Bhe, \hat{\bm{\theta}})$ and get the estimation of the eigenvalue through the Rayleigh quotient $\gamma_0 = \mathcal{I}(u(\cdot; \mathcal{T}, \Bhe, \hat{\bm{\theta}}))$. Then we can utilize Eqn.~\eqref{eqn:eigensimplify} to iteratively find the eigenpair. We define the function for Eqn.~\eqref{eqn:eigensimplify} by 
\begin{align}
    \mathcal{L}_2(v, \gamma):=\big\|-\Delta v-\|\nabla v\|_2^2 + \|\Bx\|_2^2- \gamma\big\|_{L_2(\Omega)}^2.
    \label{eqn:simpliedfunctional}
\end{align}
Given $\gamma_i$, we aim to find $v$ that is expressed by mathematical expression and minimizes $\mathcal{L}_2(v, \gamma_i)$. Assume $v$ is expressed by a binary tree ($v:=v(\cdot; \mathcal{T}, \Be, \bm{\theta})$), and then we can search the solution using our FEX with the following optimization, 
\begin{align}
    \Be_i^*, \bm{\theta}_i^* \approx \argmin_{\Be, \bm{\theta}}\mathcal{L}_2(v(\cdot; \mathcal{T}, \Be, \bm{\theta}), \gamma_i).
    \label{eqn:simpliedfunctionaliterative}
\end{align}
Next, we can calculate the current estimated eigenvalue by $\gamma_{i+1} =\mathcal{I}(\exp(v(\cdot; \mathcal{T}, \Be_i^*, \bm{\theta}_i^*)))$.

If continuing this loop for $G$ times, we will obtain the eigenpair $\gamma_{G}$ and $\exp(v(\cdot; \mathcal{T}, \Be_G^*, \bm{\theta}_G^*))$. 

For implementation, the number of the iterative loop is $G=10$. $\lambda_1=\lambda_2=500$ in ~\eqref{eqn:eigenvalueloss}. The batch size for estimating the first term and third term of~\eqref{eqn:eigenvalueloss} is 10,000 while that of the second term (boundary) is 2,000. ReLU$^2$ is used in ResNet, following~\cite{yu2018deep}.

\vskip 0.2in
\bibliography{reference}

\end{document}